\definecolor{darkgreen}{rgb}{0,0.5,0}
\numberwithin{equation}{section}
\newtheorem{thm}[equation]{\sc Theorem}
\newtheorem{lem}[equation]{\sc Lemma}
\newtheorem{cor}[equation]{\sc Corollary}
\newtheorem{prop}[equation]{\sc Proposition}
\newtheoremstyle{notation}{3pt}{3pt}{}{}{\itshape}{:}{.5em}{\thmname{#1}}
\theoremstyle{notation}
\newtheorem{rem}{\it Remark}
\newtheorem{defin}{\it Definition}
\newtheorem{ex}{\it Example}
\newtheorem{qu}{\it Question}
\newtheorem{alg}{\it Algorithm}
\renewcommand{\@seccntformat }[1]{\csname the#1\endcsname. }
\chardef\xnearrow='045
\chardef\xnwarrow='055
\chardef\xsearrow='046
\chardef\xswarrow='056
\newcommand\CD{{\mathcal D}}
\newcommand\epv {{$\mbox{}$\hfill ${\Box}$\vspace*{1.5ex} }}
\newcommand\sinfty{\raisebox{1pt}{$\hspace{-1pt}\scriptstyle\infty\hspace{-1pt}$}}
\newcommand\Hom{\mbox{\rm Hom}}
\newcommand\Aut{\mbox{\rm Aut}}
\renewcommand\Im{\mbox{\rm Im}}
\newcommand\Coker{\mbox{\rm Coker}}
\newcommand\barD{\overline{\mathcal D}_{\alpha,\gamma}^\beta}
\newcommand\boxleq{\leq_{\rm box}}
\newcommand\arcleq{\leq_{\rm arc}}
\newcommand\degleq{\leq_{\rm deg}}
\newcommand\vedge[1]{{\buildrel{#1} \over {\hbox to
20pt{\hspace{-0.2em}$-$\hspace{-0.2em}$-$\hspace{-0.2em}$-$ }}}}
\newcounter{boxsize}
\newcounter{tempcounter}
\newcommand{\smallentryformat}{\scriptstyle\sf}
\newcommand\smbox{\put(0,0){\line(1,0){\value{boxsize}}}%
  \put(\value{boxsize},0){\line(0,1){\value{boxsize}}}%
  \put(0,0){\line(0,1){\value{boxsize}}}%
  \put(0,\value{boxsize}){\line(1,0){\value{boxsize}}}}
\newcommand\numbox[1]{\put(0,0)\smbox%
  \put(0,0){\makebox(\value{boxsize},\value{boxsize})[c]{%
      $\smallentryformat#1$}}}
\newcommand\singlebox[1]{\raisebox{-.4ex}{\begin{picture}(4,0)\setcounter{boxsize}{3}%
    \put(0,0)\smbox%
    \put(0,0){\makebox(\value{boxsize},\value{boxsize})[c]{%
      $\scriptstyle\sf#1$}}\end{picture}}}
\newcommand\boxes[2]{\ifthenelse{#2=3}{$\scriptstyle P_2^{#1}$}{%
                                       $\scriptstyle P_{#2}^{#1}$}}
\renewcommand\sb[1]{\makebox[0mm]{$\scriptstyle#1$}}
\begin{document}
\thispagestyle{empty}
\sloppy


\bigskip\bigskip
\begin{center}
{\large\bf Arc diagram varieties} 
\end{center}

\par

\begin{center}
Justyna Kosakowska and Markus Schmidmeier
\footnote{The first named author is partially supported by the research grant No.\ DEC-2011/02/A/ ST1/00216
of the Polish National Science Center.}\vspace{1cm}
\footnote{This work is partially supported by a grant from the 
Simons Foundation (Grant number 245848 to the second named author).}

\centerline{}

\bigskip \parbox{10cm}{\footnotesize{\bf Abstract:}
Let $k$ be an algebraically closed field and $\alpha$, $\beta$, $\gamma$
be partitions.
An algebraic group acts on the constructible set of 
short exact sequences of nilpotent $k$-linear operators 
of Jordan types $\alpha$, $\beta$, and $\gamma$, respectively;
we are interested in the stratification given by the orbits
in the case where all parts of $\alpha$ are at most 2.
Geometric properties of the degeneration relation are controlled by the
combinatorics of arc diagrams. 
We ask if all saturated chains of strata have the same length.
Using arc diagrams we show that this property is not true in general
but holds in case $\beta\setminus\gamma$ is a vertical stripe.
The extended bubble sort algorithm is used to construct chains of 
orbits such that subsequent strata have dimension difference equal to one.
 }

\smallskip\parbox{10cm}{\footnotesize{\bf MSC 2010:} 
Primary: 14L30, 
Secondary: 
05C85,  
16G20, 
47A15,  
68P10  
}

\smallskip\parbox{10cm}{\footnotesize{\bf Key words:} 
degenerations, partial orders, Hall polynomials, 
nilpotent operators, invariant subspaces, Littlewood-Richardson
tableaux, stratification, saturated chains, bubble sort algorithm, 
Bruhat order} 
\end{center}

\medskip

\section{Introduction}

Let $k$ be an algebraically closed field.
For a partition $\alpha=(\alpha_1\geq\ldots\geq\alpha_n)$
we denote by $N_\alpha$ the nilpotent linear operator $T:V\to V$
where $V$ is a $k$-vector space of dimension $|\alpha|=\alpha_1+\cdots+\alpha_n$
and where the operator $T$ can be represented by a matrix 
of Jordan type $\alpha$.
Denote by $\mathcal N$ the category of all  nilpotent linear operators. 
It is well-known that the map $\alpha\mapsto N_\alpha$
defines a one-to-one correspondence between the set 
of all partitions and the set of isomorphism classes of objects in $\mathcal N$
\cite[II,(1.4)]{macd}.

\smallskip
Let $\alpha$, $\beta$, $\gamma$ be partitions.
The affine variety 
$\mathbb H_\alpha^\beta=\Hom_k(N_\alpha,N_\beta)$
(consisting of all $|\beta|\times |\alpha|$-matrices with coefficients in $k$)
with the Zariski topology
contains as constructible subset the set 
$\mathbb V_{\alpha,\gamma}^\beta$ of monomorphisms
$f:N_\alpha\to N_\beta$ such that $\Coker f\cong N_\gamma$.
We consider $\mathbb V_{\alpha,\gamma}^\beta$ as a topological space with the
induced topology.

\smallskip
On $\mathbb V_{\alpha,\gamma}^\beta$ acts the algebraic group 
$G=\Aut_{\mathcal{N}}(N_\alpha)\times \Aut_{\mathcal{N}}( N_\beta)$
via $(g,h)\cdot f=hfg^{-1}$. 
The orbits of this action correspond bijectively to the isomorphism
classes of short exact sequences 
$$0\longrightarrow N_\alpha \longrightarrow N_\beta\longrightarrow N_\gamma
\longrightarrow 0$$
of the given type $(\alpha,\beta,\gamma)$.
In the case where all parts of $\alpha$ are at most 2, the orbits are
in one-to-one correspondence with arc diagrams and define a stratification
for $\mathbb V_{\alpha,\gamma}^\beta$.

\smallskip
In Section~\ref{section-stratification} we compute the orbit dimensions, and
describe in terms of operations on arc diagrams which orbits form
the boundary of a given orbit.  The orbits together with the degeneration
relation form the partially ordered set $\mathcal D_{\alpha,\gamma}^\beta$.
We review results from \cite{kossch} 
and list some references regarding the history of the underlying
counting and isomorphism problems for subgroup embeddings.

\smallskip
In Section~\ref{section-posets} we deal with the question whether all
saturated chains in $\mathcal D_{\alpha,\gamma}^\beta$ have the same length.
While this is not the case in general, we obtain a positive answer
in case $\beta\setminus\gamma$ is a vertical stripe
(Corollaries~\ref{cor-saturated-chains} and \ref{cor-sat-general}).

\smallskip
In this situation, 
the extended bubble sort algorithm in Section~\ref{section-bubble-sort}
produces saturated chains in $\mathcal D_{\alpha,\gamma}^\beta$
such that any two subsequent orbits have dimension difference one. 

\smallskip
In the last Section~\ref{section-excursions} 
we discuss links to projective varieties; in fact, projective spaces
and Grassmannians occur as epimorphic images of arc diagram varieties
of type $\mathbb V_{\alpha,\gamma}^\beta$.  Finally we note that the 
degeneration order for nilpotent operators is just the opposite
order of a natural partial ordering 
for Littlewood-Richardson tableaux (Proposition~\ref{prop-dbar}).

\medskip
{\it Acknowledgement.} The authors would like to thank Birge Huisgen-Zimmermann for
her interest in their work.  In fact, her questions regarding the length
of saturated chains in arc diagram varieties have motivated this paper.

\section{The stratification}\label{section-stratification}

We assume throughout that $k$ is an algebraically closed field, 
and that $\alpha$, $\beta$, $\gamma$ are partitions where 
$\alpha$ is such that
all parts in $\alpha$ are at most $2$, i.e.\ $\alpha_1\leq 2$ holds.
Then the conjugate $\alpha'$ of $\alpha$ has two parts
$\alpha'=(\alpha'_1,\alpha'_2)$ where $\alpha'_2$ counts the number of $2$'s
in $\alpha$ and $\alpha'_1-\alpha'_2$ the number of $1$'s.

\subsection{From short exact sequences to arc diagrams}

\begin{defin}
  \begin{enumerate}
  \item An {\bf arc diagram} $\Delta$ {\bf of type} $(\alpha,\beta,\gamma)$ has 
    $\alpha'_2$ arcs and $\alpha'_1-\alpha'_2$ poles
    which are arranged such that at each point $i$, 
    the number of arcs and poles starting or ending 
    is $\beta'_i-\gamma'_i$.
  \item     By $\mathcal D_{\alpha,\gamma}^\beta$ we denote the set of all
    arc diagrams of type $(\alpha,\beta,\gamma)$.
  \end{enumerate}
\end{defin}

\begin{ex}
Let $\alpha=(2,2,1,1)$, (so $\alpha'=(4,2)$), 
$\beta=(4,3,3,2,2,1)$, $\gamma=(3,2,2,1,1)$.
The following arc diagrams have type $(\alpha,\beta,\gamma)$.
\setlength\unitlength{.8mm}
$$
  \begin{picture}(140,20)
    \put(30,0){\framebox(22,18)[t]{%
        \begin{picture}(20,12)(0,5)
          \put(3,14){\sb{\Delta_{43}^{2a}}}
          \put(0,4){\line(1,0){20}}
          \multiput(4,3)(4,0)4{\sb\bullet}
          \put(4,0){\sb 4}
          \put(8,0){\sb 3}
          \put(12,0){\sb 2}
          \put(16,0){\sb 1}
          \put(6,4){\oval(4,4)[t]}
          \put(12,4){\oval(8,8)[t]}
          \put(12,4){\line(1,4){2.5}}
          \put(12,4){\line(-1,4){2.5}}
    \end{picture}}}
    \put(60,0){\framebox(22,18)[t]{%
        \begin{picture}(20,12)(0,5)
          \put(3,14){\sb{\Delta_{43}^{2b}}}
          \put(0,4){\line(1,0){20}}
          \multiput(4,3)(4,0)4{\sb\bullet}
          \put(4,0){\sb 4}
          \put(8,0){\sb 3}
          \put(12,0){\sb 2}
          \put(16,0){\sb 1}
          \put(10,4){\oval(4,4)[t]}
          \put(10,4){\oval(12,12)[t]}
          \put(12,4){\line(0,1){10}}
          \put(8,4){\line(0,1){10}}
    \end{picture}}}
    \put(0,0){\framebox(22,18)[t]{%
        \begin{picture}(20,12)(0,5)
          \put(3,14){\sb{\Delta_{43}^3}}
          \put(0,4){\line(1,0){20}}
          \multiput(4,3)(4,0)4{\sb\bullet}
          \put(4,0){\sb 4}
          \put(8,0){\sb 3}
          \put(12,0){\sb 2}
          \put(16,0){\sb 1}
          \put(8,4){\oval(8,8)[t]}
          \put(12,4){\oval(8,8)[t]}
          \put(12,4){\line(0,1){10}}
          \put(8,4){\line(0,1){10}}
    \end{picture}}}
    \put(90,0){\framebox(22,18)[t]{%
        \begin{picture}(20,12)(0,5)
          \put(3,14){\sb{\Delta_{43}^1}}
          \put(0,4){\line(1,0){20}}
          \multiput(4,3)(4,0)4{\sb\bullet}
          \put(4,0){\sb 4}
          \put(8,0){\sb 3}
          \put(12,0){\sb 2}
          \put(16,0){\sb 1}
          \put(10,4){\oval(4,4)[t]}
          \put(8,4){\oval(8,8)[t]}
          \put(8,4){\line(0,1){8}}
          \put(16,4){\line(0,1){10}}
    \end{picture}}}
    \put(120,0){\framebox(22,18)[t]{%
        \begin{picture}(20,12)(0,5)
          \put(3,14){\sb{\Delta_{43}^0}}
          \put(0,4){\line(1,0){20}}
          \multiput(4,3)(4,0)4{\sb\bullet}
          \put(4,0){\sb 4}
          \put(8,0){\sb 3}
          \put(12,0){\sb 2}
          \put(16,0){\sb 1}
          \put(6,4){\oval(4,4)[t]}
          \put(10,4){\oval(4,4)[t]}
          \put(12,4){\line(0,1){10}}
          \put(16,4){\line(0,1){10}}
    \end{picture}}}
\end{picture}$$

\end{ex}


Before we give a detailed description of the stratification
$\{\mathbb V_\Delta:\Delta\in\mathcal D_{\alpha,\gamma}^\beta\}$ 
for $\mathbb V_{\alpha,\gamma}^\beta$,
we review briefly how tableaux provide a link between short exact sequences
and arc diagrams.

\smallskip
The following result is stated in \cite{klein1} for $p$-groups:

\begin{thm}
Given partitions $\alpha$, $\beta$, $\gamma$, there exists a short exact
sequence of nilpotent linear operators 
$0\to N_\alpha\to N_\beta\to N_\gamma\to 0$ if and only if there exists
a Littlewood-Richardson (LR-) tableau $\Gamma$ of type $(\alpha,\beta,\gamma)$.
\end{thm}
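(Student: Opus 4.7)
The plan is to establish each implication via the filtration of $\iota(N_\alpha)$ by powers of the nilpotent operator $T$, thereby linking short exact sequences to LR-tableaux in an essentially constructive manner.

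For the implication ($\Rightarrow$), suppose the SES $0\to N_\alpha \stackrel{\iota}{\to} N_\beta \to N_\gamma\to 0$ is given, and let $T$ denote the nilpotent operator on $N_\beta$. The submodules $U_i := \iota(T^iN_\alpha) \subseteq N_\beta$ form a decreasing chain starting at $U_0 = \iota(N_\alpha)$ and terminating at $U_{\alpha_1} = 0$. Let $\beta^{(i)}$ denote the Jordan type of $N_\beta/U_i$; then $\beta^{(0)}=\gamma$ and $\beta^{(\alpha_1)} = \beta$. Each successive quotient $U_{i-1}/U_i$ is annihilated by $T$, hence semisimple of dimension $\alpha'_i$, which forces the skew shape $\beta^{(i)}/\beta^{(i-1)}$ to be a horizontal strip of size $\alpha'_i$. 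Filling its boxes with the symbol $i$ produces a semistandard skew tableau $\Gamma$ of shape $\beta/\gamma$ and content $\alpha$, which one then verifies to be an LR-tableau.

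For the implication ($\Leftarrow$), I would proceed by induction on $|\alpha|$. Given an LR-tableau $\Gamma$, select an entry equal to $\alpha_1$ occupying an outer corner of $\beta/\gamma$ and delete it, obtaining an LR-tableau $\Gamma'$ of type $(\alpha^-,\beta^-,\gamma)$, where $\alpha^-$ loses one part of size $\alpha_1$ and $\beta^-$ loses the corresponding box. The inductive hypothesis produces a SES $0\to N_{\alpha^-}\to N_{\beta^-}\to N_\gamma\to 0$; extending it by a suitable cyclic summand at the position dictated by the removed box yields the required SES for type $(\alpha,\beta,\gamma)$.

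The main obstacle is verifying the lattice word condition on $\Gamma$ in the ($\Rightarrow$) direction. The semistandard property follows readily from the horizontal-strip structure of the successive quotients, but the lattice word condition encodes the extra constraint $T\cdot U_{i-1}\subseteq U_i$ and must be deduced from a system of rank inequalities for the conjugate partitions of the $\beta^{(i)}$. A complementary difficulty in the ($\Leftarrow$) direction is to check that deletion of a maximal outer corner preserves the LR-property and that the constructed extension has the correct middle term; this is combinatorially dual to the $(\Rightarrow)$ argument and can be handled using the standard reverse insertion procedure for LR-tableaux.
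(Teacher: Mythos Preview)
The paper does not give its own proof of this theorem; it simply quotes the result and attributes it to Klein \cite{klein1} (the surrounding theory is developed in Macdonald \cite{macd}, Chapter~II). So there is no in-paper argument to compare your proposal against.

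That said, your $(\Rightarrow)$ outline is the classical route: the filtration $U_i=\iota(T^iN_\alpha)$ yields a chain $\gamma=\beta^{(0)}\subset\cdots\subset\beta^{(\alpha_1)}=\beta$ whose successive skew shapes are horizontal strips (in the paper's convention, where the parts of a partition are column heights and ``rows'' of the tableau are nilpotency levels), and you are right that the lattice-word condition is the substantive step, obtainable from rank inequalities that exploit the surjections $T\colon U_{i-1}\twoheadrightarrow U_i$.

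For $(\Leftarrow)$ there is a small inconsistency worth fixing. Deleting a \emph{single} box with entry $\alpha_1$ changes the content from $\alpha'$ to $(\alpha'_1,\ldots,\alpha'_{\alpha_1-1},\alpha'_{\alpha_1}-1)$, so the resulting $\alpha^-$ is $\alpha$ with one part of size $\alpha_1$ \emph{shortened by one}, not removed entirely. If you genuinely want $\alpha^-$ to lose a whole part of size $\alpha_1$ (as you wrote), you must instead delete one box of each entry $1,2,\ldots,\alpha_1$ along a suitable strip, which is a different inductive scheme. Either variant can be made to work, but the box-deletion and the description of $\alpha^-$ must match; as written they do not, and the ``extend by a cyclic summand'' step is only correct for the second variant.
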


\begin{defin}
\begin{enumerate}\item
    Given three partitions $\alpha,\beta,\gamma$, 
    an {\bf LR-tableau} of type $(\alpha,\beta,\gamma)$ is a skew diagram 
    of shape $\beta\backslash \gamma$ 
    with $\alpha'_1$ entries $\singlebox 1$,
    $\alpha'_2$ entries $\singlebox 2$, etc.
    The entries are weakly increasing in each row, strictly
    increasing in each column, and satisfy the lattice permutation property 
    (for each $c\geq 0$, $\ell\geq 2$ there are at least 
    as many entries $\ell-1$ on the 
    right hand side of the $c$-th column as there are entries $\ell$).
\item    The {\bf LR-coefficient} $c_{\alpha,\gamma}^\beta$ counts the 
    number of LR-tableaux of type $(\alpha,\beta,\gamma)$.
\end{enumerate}
\end{defin}

\begin{ex}\label{ex-gammas}
Let $\alpha=(2,2,1,1)$, $\beta=(4,3,3, 2,2,1)$, $\gamma=(3,2,2,1,1)$.
There are 4 LR-tableaux of type $(\alpha,\beta,\gamma)$,
so $c_{\alpha,\gamma}^\beta=4$.

\setlength\unitlength{1mm}
$$
\begin{picture}(95,20)(0,-2)
  \put(0,10){
    \begin{picture}(18,12)(0,6)
      \put(8,-5){$\Gamma_{43}$}
      \multiput(0,9)(3,0)5{\smbox}
      \put(15,9){\numbox{1}}
      \multiput(0,6)(3,0)3{\smbox}
      \put(12,6){\numbox{1}}
      \put(9,6){\numbox{1}}
      \put(0,3){\smbox}
      \put(3,3){\numbox{1}}
      \put(6,3){\numbox{2}}
      \put(0,0){\numbox{2}}
    \end{picture}
  }
  \put(25,10){
    \begin{picture}(18,12)(0,6)
      \put(8,-5){$\Gamma_{42}$}
      \multiput(0,9)(3,0)5{\smbox}
      \put(15,9){\numbox{1}}
      \multiput(0,6)(3,0)3{\smbox}
      \put(12,6){\numbox{2}}
      \put(9,6){\numbox{1}}
      \put(0,3){\smbox}
      \put(3,3){\numbox{1}}
      \put(6,3){\numbox{1}}
      \put(0,0){\numbox{2}}
    \end{picture}
  }
  \put(50,10){
    \begin{picture}(18,12)(0,6)
      \put(8,-5){$\Gamma_{33}$}
      \multiput(0,9)(3,0)5{\smbox}
      \put(15,9){\numbox{1}}
      \multiput(0,6)(3,0)3{\smbox}
      \put(12,6){\numbox{1}}
      \put(9,6){\numbox{1}}
      \put(0,3){\smbox}
      \put(3,3){\numbox{2}}
      \put(6,3){\numbox{2}}
      \put(0,0){\numbox{1}}
    \end{picture}
  }
  \put(75,10){
    \begin{picture}(18,12)(0,6)
      \put(8,-5){$\Gamma_{32}$}
      \multiput(0,9)(3,0)5{\smbox}
      \put(15,9){\numbox{1}}
      \multiput(0,6)(3,0)3{\smbox}
      \put(12,6){\numbox{2}}
      \put(9,6){\numbox{1}}
      \put(0,3){\smbox}
      \put(3,3){\numbox{1}}
      \put(6,3){\numbox{2}}
      \put(0,0){\numbox{1}}
    \end{picture}
  }
\end{picture}
$$
(In the expression $\Gamma_{ij}$, the subscript $ij$ lists the rows which
contain the symbol $\singlebox2$, and hence determines the LR-tableau
uniquely in the case where $\alpha_1\leq 2$.)
\end{ex}

\begin{defin}
  \begin{enumerate}
  \item     A {\bf Klein tableau} 
    of type $(\alpha,\beta,\gamma)$
    is a refinement of the LR-tableau of the same type
    in the sense that each entry $\ell\geq 2$ carries a subscript,
    subject to the following conditions (see \cite[(1.2)]{klein}):
    \begin{enumerate}
    \item If a symbol $\singlebox {\ell_r}$ 
      occurs in the $m$-th row in the tableau, 
      then $1\leq r\leq m-1$.
    \item If $\singlebox{\ell_r}$ occurs in the $m$-th row
      and the entry above $\singlebox{\ell_r}$ is $\ell-1$, 
      then $r=m-1$.
    \item The total number of symbols $\singlebox{\ell_r}$ in the tableau 
      cannot exceed the number of entries $\ell-1$ in row $r$. 
    \end{enumerate}
  \item 
    Let $\Gamma$ be an LR-tableau with entries at most 2, and $\Pi$ a Klein
    tableau which refines $\Gamma$.  The {\bf arc diagram corresponding to}
    $\Pi$ is obtained by drawing an arc from $m$ to $j$ for each pair of boxes
    $\singlebox{2_j}$ in row $m$ and $\singlebox{1}$ in row $j$,
    and by drawing a pole at $r$ 
    for each remaining box $\singlebox{1}$ in row $r$.
  \end{enumerate}
\end{defin}

\begin{ex}
Here are the five Klein tableaux
which refine the LR-tableau $\Gamma_{43}$.
\nopagebreak

\setlength\unitlength{1mm}
$$
\begin{picture}(120,16)(0,-2)
  \put(0,10){
    \begin{picture}(18,12)(0,6)
      \put(8,-5){$\Pi_{43}^3$}
      \multiput(0,9)(3,0)5{\smbox}
      \put(15,9){\numbox{1}}
      \multiput(0,6)(3,0)3{\smbox}
      \put(12,6){\numbox{1}}
      \put(9,6){\numbox{1}}
      \put(0,3){\smbox}
      \put(3,3){\numbox{1}}
      \put(6,3){\numbox{2_1}}
      \put(0,0){\numbox{2_2}}
    \end{picture}
  }
  \put(25,10){
    \begin{picture}(18,12)(0,6)
      \put(8,-5){$\Pi_{43}^{2a}$}
      \multiput(0,9)(3,0)5{\smbox}
      \put(15,9){\numbox{1}}
      \multiput(0,6)(3,0)3{\smbox}
      \put(12,6){\numbox{1}}
      \put(9,6){\numbox{1}}
      \put(0,3){\smbox}
      \put(3,3){\numbox{1}}
      \put(6,3){\numbox{2_1}}
      \put(0,0){\numbox{2_3}}
    \end{picture}
  }
  \put(50,10){
    \begin{picture}(18,12)(0,6)
      \put(8,-5){$\Pi_{43}^{2b}$}
      \multiput(0,9)(3,0)5{\smbox}
      \put(15,9){\numbox{1}}
      \multiput(0,6)(3,0)3{\smbox}
      \put(12,6){\numbox{1}}
      \put(9,6){\numbox{1}}
      \put(0,3){\smbox}
      \put(3,3){\numbox{1}}
      \put(6,3){\numbox{2_2}}
      \put(0,0){\numbox{2_1}}
    \end{picture}
  }
  \put(75,10){
    \begin{picture}(18,12)(0,6)
      \put(8,-5){$\Pi_{43}^1$}
      \multiput(0,9)(3,0)5{\smbox}
      \put(15,9){\numbox{1}}
      \multiput(0,6)(3,0)3{\smbox}
      \put(12,6){\numbox{1}}
      \put(9,6){\numbox{1}}
      \put(0,3){\smbox}
      \put(3,3){\numbox{1}}
      \put(6,3){\numbox{2_2}}
      \put(0,0){\numbox{2_2}}
    \end{picture}
  }
  \put(100,10){
    \begin{picture}(18,12)(0,6)
      \put(8,-5){$\Pi_{43}^0$}
      \multiput(0,9)(3,0)5{\smbox}
      \put(15,9){\numbox{1}}
      \multiput(0,6)(3,0)3{\smbox}
      \put(12,6){\numbox{1}}
      \put(9,6){\numbox{1}}
      \put(0,3){\smbox}
      \put(3,3){\numbox{1}}
      \put(6,3){\numbox{2_2}}
      \put(0,0){\numbox{2_3}}
    \end{picture}
  }
\end{picture}
$$

The arc diagrams $\Delta_{43}^3$, $\Delta_{43}^{2a}$, $\Delta_{43}^{2b}$,
$\Delta_{43}^1$ and $\Delta_{43}^0$ given by the above Klein tableaux are pictured 
in the beginning of this section.  (The exponent $x$ in $\Delta_{ij}^x$
counts the number of intersections.)
\end{ex}

\smallskip
By $\mathcal S_2$ we denote the category of all sequences
$0\to N_\alpha\to N_\beta\to N_\gamma\to 0$ where $\alpha$ has all parts at most 2.
Based on the classification of the indecomposable embeddings in
\cite[Theorem~7.5]{bhw}, it is shown in \cite[Proposition~2]{sch} 
that there is a one-to-one correspondence
$$\big\{\text{objects in $\mathcal S_2$}\big\}{\big/}_{\cong} \;
\stackrel{\text{1-1}}\longleftrightarrow\;
\big\{\text{Klein tableaux with entries at most 2}\big\}.$$

Summarizing we obtain:

\begin{cor}\label{lemma-Gorbits}
  There is a one-to-one correspondence
  $$\mathcal D_{\alpha,\gamma}^\beta \stackrel{\text{\rm 1-1}}\longleftrightarrow
  \big\{G\text{-orbits in }\mathbb V_{\alpha,\gamma}^\beta\big\}.$$
\end{cor}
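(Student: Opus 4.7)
The plan is to chain together three bijections, all essentially recalled or defined in the preceding material of this section. First, as noted at the start of Section~\ref{section-stratification}, two monomorphisms $f,f'$ in $\mathbb V_{\alpha,\gamma}^\beta$ lie in the same $G$-orbit precisely when the short exact sequences they determine are isomorphic as objects of $\mathcal S_2$; so $G$-orbits in $\mathbb V_{\alpha,\gamma}^\beta$ correspond bijectively to isomorphism classes of short exact sequences of type $(\alpha,\beta,\gamma)$. This step is a general observation about the action $(g,h)\cdot f = hfg^{-1}$ and requires no further work.

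Second, I would invoke the bijection recalled just above the corollary: by \cite[Proposition~2]{sch}, which rests on the classification of indecomposable embeddings in \cite[Theorem~7.5]{bhw}, isomorphism classes of objects in $\mathcal S_2$ correspond to Klein tableaux with entries at most $2$. Restricting to the type $(\alpha,\beta,\gamma)$ yields a bijection between isomorphism classes of short exact sequences of this type and Klein tableaux of this type.

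The third step is to verify that the construction \emph{arc diagram corresponding to $\Pi$} introduced in the definition above provides a bijection between Klein tableaux of type $(\alpha,\beta,\gamma)$ and the set $\mathcal D_{\alpha,\gamma}^\beta$ of arc diagrams of type $(\alpha,\beta,\gamma)$. Forward direction: each entry $\singlebox{2_j}$ in row $m$ produces an arc from $m$ to $j$, where $j<m$ by condition (a); each leftover $\singlebox{1}$ in row $r$ produces a pole at $r$. Condition (c) ensures that the pairing of $\singlebox{2}$'s with matching $\singlebox{1}$'s is well-defined, and condition (b) pins down the subscript whenever a $\singlebox{1}$ sits directly above a $\singlebox{2}$. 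Counting gives $\alpha'_2$ arcs and $\alpha'_1-\alpha'_2$ poles, and reading off column $i$ of the skew shape $\beta\setminus\gamma$ shows that the number of endpoints at point $i$ is $\beta'_i-\gamma'_i$, as required. For the inverse, the subscripts of the Klein tableau are read directly from the target rows of the arcs, and conditions (a)--(c) are forced by the combinatorial structure of the arc diagram.

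Composing the three bijections yields the claimed correspondence $\mathcal D_{\alpha,\gamma}^\beta \leftrightarrow \{G\text{-orbits in }\mathbb V_{\alpha,\gamma}^\beta\}$. The main effort lies in the third step; the first is a general fact about the orbit--isomorphism-class correspondence, and the second is a direct citation. Even the third step is essentially bookkeeping, since the definition of the arc diagram of a Klein tableau is already a transcription of the subscript data as labels on arc endpoints, so no serious obstacle is expected.
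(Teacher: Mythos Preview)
Your proposal is correct and follows exactly the approach the paper intends: the paper's own ``proof'' is simply the phrase ``Summarizing we obtain'', relying on the chain of bijections set up in the preceding paragraphs (orbits $\leftrightarrow$ isomorphism classes in $\mathcal S_2$ $\leftrightarrow$ Klein tableaux $\leftrightarrow$ arc diagrams), and you have spelled out precisely those three steps with the appropriate citations to \cite{bhw} and \cite{sch}. Your added detail in the third step, verifying that the Klein-tableau-to-arc-diagram map is a bijection with the correct endpoint multiplicities, is a welcome elaboration of what the paper leaves implicit.
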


\subsection{Strata given by arc diagrams}

\begin{defin}
For an arc diagram $\Delta$ of type $(\alpha,\beta,\gamma)$ 
we denote the corresponding $G$-orbit
in $\mathbb V_{\alpha,\gamma}^\beta$ by $\mathbb V_\Delta$.
\end{defin}

\begin{prop}
  The set $\{\mathbb V_\Delta:\Delta\in\mathcal D_{\alpha,\gamma}^\beta\}$
  forms a stratification for $\mathbb V=\mathbb V_{\alpha,\gamma}^\beta$ 
  in the sense that
  \begin{enumerate}
  \item  Each $\mathbb V_\Delta$ is locally closed in $\mathbb V$.
  \item  $\mathbb V$ is the disjoint union 
    $\bigcup^\bullet_{\Delta\in\mathcal D_{\alpha,\gamma}^\beta}\mathbb V_\Delta$.
  \item For each $\Delta$ there is a finite subset
    $U_\Delta \subset \mathcal D_{\alpha,\gamma}^\beta$
    such that the closure $\overline{\mathbb V}_\Delta$ 
    is just the union $\bigcup_{\Gamma\in U_\Delta} \mathbb V_\Gamma$.
  \end{enumerate}  
\end{prop}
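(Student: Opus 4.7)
The plan is to derive all three assertions from two ingredients: the bijection established in Corollary~\ref{lemma-Gorbits}, together with the classical fact that orbits of an algebraic group action are locally closed in the ambient variety.

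First I would verify finiteness. The set $\mathcal D_{\alpha,\gamma}^\beta$ is clearly finite because an arc diagram of type $(\alpha,\beta,\gamma)$ has only $\alpha'_2$ arcs and $\alpha'_1-\alpha'_2$ poles, all with endpoints among finitely many points, so at most finitely many configurations occur. Combined with Corollary~\ref{lemma-Gorbits}, this tells us that $\mathbb V=\mathbb V_{\alpha,\gamma}^\beta$ has only finitely many $G$-orbits. Since the orbits are precisely the sets $\mathbb V_\Delta$ and they partition $\mathbb V$ (any action of any group on any set yields a partition into orbits), part~(ii) follows immediately.

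For part~(i) I would invoke the standard result (see e.g.\ Borel, \emph{Linear Algebraic Groups}, Proposition~1.8) that each orbit of an algebraic group acting morphically on a variety is open in its closure, and hence locally closed. The action of $G=\Aut_{\mathcal N}(N_\alpha)\times\Aut_{\mathcal N}(N_\beta)$ on the affine variety $\mathbb H_\alpha^\beta=\Hom_k(N_\alpha,N_\beta)$ is a morphism of algebraic varieties, so each orbit $\mathbb V_\Delta$ is locally closed in $\mathbb H_\alpha^\beta$. Intersecting with $\mathbb V$ (which contains $\mathbb V_\Delta$) preserves the locally closed property, giving~(i).

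For part~(iii) I would use that the closure $\overline{\mathbb V}_\Delta$ taken inside $\mathbb V$ is $G$-invariant: the action of each $(g,h)\in G$ is a homeomorphism of $\mathbb V$, and since it preserves the invariant subset $\mathbb V_\Delta$ it must also preserve its closure. A $G$-invariant subset of $\mathbb V$ is a union of $G$-orbits, and by the finiteness established above this union is automatically finite. Setting
\[
U_\Delta \;=\; \{\Gamma\in\mathcal D_{\alpha,\gamma}^\beta : \mathbb V_\Gamma\subseteq\overline{\mathbb V}_\Delta\}
\]
then yields the required description.

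The only point that requires more than bookkeeping is the local closedness in~(i); the rest is formal. The subtlety there is that $\mathbb V$ itself is only constructible in $\mathbb H_\alpha^\beta$, not closed, so strictly speaking one should argue inside the ambient variety $\mathbb H_\alpha^\beta$ and then restrict. Because the orbit lies entirely in $\mathbb V$ and local closedness is preserved under intersection with any subspace containing the set, this causes no real difficulty.
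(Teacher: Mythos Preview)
Your proof is correct and follows essentially the same approach as the paper: both use Corollary~\ref{lemma-Gorbits} to see that $\mathbb V$ decomposes into finitely many $G$-orbits, and then invoke the standard fact that orbits of an algebraic group action are locally closed with boundary a union of orbits (the paper cites \cite[Proposition~8.3]{humphreys} rather than Borel). Your write-up is more detailed, in particular your remark about working in the ambient $\mathbb H_\alpha^\beta$ and then restricting is a nice point the paper leaves implicit.
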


\begin{proof}
We have seen in Corollary~\ref{lemma-Gorbits} that $\mathbb V_{\alpha,\gamma}^\beta$
is the finite union of the $G$-orbits of type $\mathbb V_\Delta$. 
According to \cite[Proposition~8.3]{humphreys}, each orbit is a 
smooth and locally closed subset of $\mathbb V_{\alpha,\gamma}^\beta$ 
whose boundary is a union of orbits of strictly lower dimension.
\end{proof}

\begin{rem}
\begin{enumerate}
\item
  The condition on the field $k$ to be algebraically closed is only 
  needed for the last statement.  Otherwise, the field can be arbitrary,
  in fact, there need not even be a field:
  For  $\Lambda$  a discrete valuation domain with maximal ideal $m$, 
  we can define    $N_\alpha(\Lambda) =\bigoplus_{i=1}^s \Lambda/(m^{\alpha_i})$.
  In particular if $\Lambda$ is the localization $\mathbb Z_p$, then
  we are dealing with finite abelian $p$-groups.
\item 
  The problem of classifying the orbits in $\mathbb V_{\alpha,\gamma}^\beta$
  has been posed by G.\ Birkhoff in 1934 \cite{birkhoff} for
  $\Lambda=\mathbb Z_p$:  Classify all subgroups $A$ 
  of a finite abelian $p$-group
  $B$, up to automorphisms of $B$. 
  In general, the problem is considered infeasible, 
  see for example \cite{rs-wild}, but there are many partial 
  and related results:
  If the exponent of $B$ is at most $5$, then the category of embeddings
  has finite type \cite{richman-w};
  for $\Lambda=k[T]_{(T)}$, tame type occurs if the exponent of $B$ is
  at most $6$ \cite{rs};
  our category $\mathcal S_2$ has discrete representation type \cite{bhw};
  for the related problem of studying lattices over tiled orders we
  refer to  \cite{rump};
  categories of embeddings of graded operators occur in singularity
  theory \cite{klm};
  for a classification of the representation types of chain categories
  we refer to \cite{simson};
  please see \cite{zhang} 
  for homological properties of categories of embeddings.
\end{enumerate}
\end{rem}

\subsection{The dimensions of the strata}

\medskip
In this subsection we review the dimension formula:

\begin{prop}
Let $\Delta$ be an arc diagram of type $(\alpha,\beta,\gamma)$.
The stratum $\mathbb V_\Delta$ is a smooth irreducible variety 
of dimension
$$\dim \mathbb V_\Delta = \deg g_{\alpha,\gamma}^\beta + \deg a_{\alpha} - x(\Delta).$$
\end{prop}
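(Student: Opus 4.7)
The plan is to exploit that $\mathbb V_\Delta$ is a $G$-orbit, so that smoothness, irreducibility and the dimension all flow from orbit--stabilizer. For smoothness and irreducibility, one verifies that $G=\Aut_{\mathcal N}(N_\alpha)\times\Aut_{\mathcal N}(N_\beta)$ is connected: each factor is the unit group of a finite-dimensional local-type $k$-algebra (the centralizer of a nilpotent Jordan matrix), hence an open dense subvariety of an affine space. Any orbit of a connected algebraic group on a variety is automatically smooth and irreducible, and local closedness was already established.

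For the dimension, write $f\in\mathbb V_\Delta$ and let $E_\Delta\colon 0\to N_\alpha\to N_\beta\to N_\gamma\to 0$ be the short exact sequence it determines. The stabilizer of $f$ in $G$ coincides with the group $\Aut_{\mathcal S_2}(E_\Delta)$ of sequence automorphisms, which is an open dense subset of the $k$-algebra $\End_{\mathcal S_2}(E_\Delta)$. Since $\dim\Aut_{\mathcal N}(N_\lambda)=\deg a_\lambda$, orbit--stabilizer yields
$$\dim\mathbb V_\Delta \;=\; \deg a_\alpha + \deg a_\beta - \dim_k\End_{\mathcal S_2}(E_\Delta).$$
The proposition then reduces to the identity
$$\dim_k\End_{\mathcal S_2}(E_\Delta) \;=\; \deg a_\beta - \deg g_{\alpha,\gamma}^\beta + x(\Delta).$$

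To pin down this last equality I would combine two ingredients. Counting over $\mathbb F_q$ gives $|\mathbb V_{\alpha,\gamma}^\beta(\mathbb F_q)|=a_\alpha(q)\,g_{\alpha,\gamma}^\beta(q)$ because each subgroup of the relevant shape in $N_\beta(\mathbb F_q)$ carries exactly $a_\alpha(q)$ identifications with $N_\alpha(\mathbb F_q)$; taking degrees shows that the maximum orbit dimension equals $\deg a_\alpha+\deg g_{\alpha,\gamma}^\beta$, so the desired identity is correct with $x(\Delta)=0$ for any crossing-free $\Delta$. For general $\Delta$, I would use the Krull--Schmidt decomposition of $E_\Delta$ into indecomposable embeddings (from \cite{bhw}, an indecomposable corresponds to a single arc with prescribed endpoints or to a single pole), so that
$$\dim_k\End_{\mathcal S_2}(E_\Delta) \;=\; \sum_{X,Y}\dim_k\Hom_{\mathcal S_2}(E_X,E_Y),$$
where $X,Y$ run over the arcs and poles of $\Delta$.

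The main obstacle is the case analysis of $\dim\Hom_{\mathcal S_2}(E_X,E_Y)$ for each kind of pair of indecomposables, arranged to show that the total excess over the crossing-free baseline is exactly $x(\Delta)$. The key combinatorial observation (which would need a short verification in each case) is that for two arc-type indecomposables the Hom dimension increases by one precisely when the arcs cross, while pole/arc and pole/pole contributions depend only on endpoint multisets, hence do not detect crossings. Summing the excesses over all ordered pairs produces $x(\Delta)$ and establishes the formula.
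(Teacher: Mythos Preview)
Your orbit--stabilizer reduction and the resulting identity $\dim\mathbb V_\Delta=\deg a_\alpha+\deg a_\beta-\dim_k\End_{\mathcal S_2}(E_\Delta)$ are correct, and this is a natural route. The paper takes a different one: rather than computing $\End(E_\Delta)$ directly, it introduces the individual Hall polynomial $g_\Delta$, quotes Klein's result that $g_\Delta$ is monic of degree $n(\beta)-n(\alpha)-n(\gamma)-x(\Delta)$, and then cites \cite[Section~5]{kossch} for the conversion $\dim\mathbb V_\Delta=\deg g_\Delta+\deg a_\alpha$. So the appearance of $x(\Delta)$ is absorbed into Klein's combinatorics on Hall polynomials rather than into a Hom computation between indecomposable embeddings.

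Your final case analysis, however, has a genuine error. Pole/arc crossings \emph{are} counted in $x(\Delta)$: a pole at $p$ crosses an arc with source $m$ and target $n$ whenever $n<p<m$ (this is Lemma~\ref{lem-order} with one source set to $\infty$), and the move of type~(B) resolves precisely such a crossing while preserving the LR-type. For an LR-type with a single arc and a single pole, the two diagrams in $\mathcal D_\Gamma$ differ only by one pole/arc crossing, yet have $x$ equal to $1$ and $0$; by the very formula you are proving, their $\End$ dimensions must differ by one. Hence the Hom between a pole-type and an arc-type summand does detect the crossing, contrary to your claim. If you instead regard a pole as an arc with source $\infty$, as the paper does in its sequence-of-sources-and-targets formalism, the arc/pole dichotomy disappears and every pair becomes arc/arc --- but then the deferred case analysis is the whole content of the argument, not a short verification.
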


First we define the terms in the dimension formula.
From results given in \cite{klein} one can deduce the following theorem.

\begin{thm}
 For any partition triple $(\alpha,\beta,\gamma)$ there exists a~polynomial
$g_{\alpha,\gamma}^\beta(t)\in \mathbb{Z}[t]$ 
such that for any finite field $k$ we have
$$ g_{\alpha,\gamma}^\beta(|k|)=|\mathbb V_{\alpha,\gamma}^\beta(k)|,$$
where $|X|$ denotes the cardinality of the finite set $X$.
\end{thm}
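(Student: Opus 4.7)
The plan is to factor the count and invoke Hall's classical polynomiality theorem. Fix a finite field $k$ with $q$ elements and regard $N_\alpha, N_\beta, N_\gamma$ as modules over the discrete valuation ring $k[T]_{(T)}$, of Jordan types $\alpha,\beta,\gamma$, respectively.

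First I would observe that a monomorphism $f\colon N_\alpha\hookrightarrow N_\beta$ with $\Coker f\cong N_\gamma$ is determined by the pair consisting of its image $A:=\Im f\subseteq N_\beta$ (a submodule of type $\alpha$ with quotient of type $\gamma$) together with the induced isomorphism $N_\alpha\xrightarrow{\;\sim\;}A$. These data are independent, so
$$|\mathbb V_{\alpha,\gamma}^\beta(k)|\;=\;G_{\alpha,\gamma}^\beta(q)\cdot a_\alpha(q),$$
where $G_{\alpha,\gamma}^\beta(q)$ denotes the number of such submodules of $N_\beta$ and $a_\alpha(q):=|\Aut_{\mathcal N}N_\alpha(k)|$.

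Next I would invoke two polynomiality results. By P.\ Hall's theorem (see Macdonald \cite{macd}), the submodule count $G_{\alpha,\gamma}^\beta(q)$ is a polynomial in $q$ with integer coefficients---the classical Hall polynomial, whose existence is proved combinatorially by induction on $|\beta|$ using an LR-type description of the coefficients. Moreover the automorphism count $a_\alpha(q)$ is given by an explicit product formula (also due to P.\ Hall), hence is a polynomial in $q$ with integer coefficients. Setting
$$g_{\alpha,\gamma}^\beta(t)\;:=\;G_{\alpha,\gamma}^\beta(t)\cdot a_\alpha(t)\;\in\;\mathbb Z[t]$$
then gives the desired polynomial.

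The one subtlety, and the closest thing to an obstacle, is that Hall's polynomiality is classically formulated for modules over $\mathbb Z_p$ rather than over $k[T]_{(T)}$. Both are discrete valuation rings with residue field of order $q$, and Hall's inductive argument depends only on the combinatorial lattice of submodules, so it transfers verbatim to the present setting; alternatively, one may appeal directly to the refined counts in \cite{klein}, from which the result is then essentially an application of the product decomposition displayed above.
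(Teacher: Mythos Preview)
Your proposal is correct and follows exactly the route the paper has in mind: the paper does not write out a proof but simply says the theorem ``can be deduced from results in \cite{klein}'', and the deduction you supply---factoring $|\mathbb V_{\alpha,\gamma}^\beta(k)|$ as the submodule count times $|\Aut N_\alpha|$ and invoking the classical Hall--Klein polynomiality for each factor---is precisely the argument the paper records later in Section~\ref{section-excursions} (where $|\mathbb V_{\alpha,\gamma}^\beta(\mathbb F_p)|=|\Aut N_\alpha(\mathbb F_p)|\cdot|\mathbb D_{\alpha,\gamma}^\beta(\mathbb F_p)|$). Your remark about the DVR setting is well taken and matches the paper's implicit reliance on \cite{klein,macd}.
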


Polynomials $g_{\alpha,\gamma}^\beta(t)$ are called {\bf Hall polynomials}.
It is known (see \cite{macd}) that 
$$ \deg\, g_{\alpha,\gamma}^\beta(t)=n(\beta)-n(\alpha)-n(\gamma), $$
where for a~partition $\lambda$ the  moment is defined as 
$$n(\lambda)=\sum_{i\geq 0}(i-1)\lambda_i. $$

\smallskip
A formula for the cardinality 
$a_\alpha(q)=|\Aut N_\alpha(\mathbb F_q)|$ 
of the automorphism group of $N_\alpha$ is given in \cite[II, (1.6)]{macd}.
In particular, $\deg a_\alpha = |\alpha|+2 n(\alpha)$.

\smallskip
For an arc diagram $\Delta$, we denote by $x(\Delta)$ the
number of intersections in $\Delta$.

\begin{defin}
For a Littlewood-Richardson tableau $\Gamma$ of type $(\alpha,\beta,\gamma)$,
we say an arc diagram $\Delta$ has {\bf Littlewood-Richardson type} $\Gamma$
if for each $i$, the number of arcs in $\Delta$ starting at $i$
equals the number of $2$'s in the $i$-th row of $\Gamma$.
We write $\mathbb V_\Gamma=\bigcup_{\text{$\Delta$ has type $\Gamma$}}\mathbb V_\Delta$.
\end{defin}

It follows from the previous section:
\begin{equation}
 \mathbb V_{\alpha,\gamma}^\beta\;=\;  \bigcup_\Gamma \; \mathbb V_\Gamma \;=\; 
\bigcup_\Gamma\bigcup_\Delta \; \mathbb V_\Delta,
\label{eq-variety-decomp}\end{equation}
where the first union is indexed by all LR-tableaux $\Gamma$
of type $(\alpha,\beta,\gamma)$ and the second
union is indexed by all arc diagrams $\Delta$ of type $\Gamma$.

\smallskip It is well known that orbits of an~algebraic group action
are locally closed sets. It follows that $\mathbb V_{\alpha,\gamma}^\beta(k)$ and $\mathbb V_\Gamma(k)$
are constructible sets, because they are finite unions of locally closed
sets  $\mathbb V_\Delta(k)$.

\smallskip
Correspondingly, if $k$ is a finite field of $q$ elements, there is the following
sum formula for Hall polynomials,
\begin{equation}
g_{\alpha,\gamma}^\beta(q) \;=\;  \sum_\Gamma g_\Gamma(q) \;=\; 
\sum_\Gamma\sum_{\Delta}g_\Delta(q), 
\label{eq-hall-poly}\end{equation}
where the indices are as above.  
The polynomials $g_\Gamma$ are monic of the same degree
$n(\beta)-n(\alpha)-n(\gamma)$, 
while the polynomials $g_\Delta$ are monic of degree
$n(\beta)-n(\alpha)-n(\gamma)-x(\Delta)$
(\cite[Corollaries~1-3]{klein}, here $x(\Delta)$ 
is the deviation from dominance of the prototype given by
the arc diagram $\Delta$).

\smallskip  The formulae (\ref{eq-variety-decomp}) and (\ref{eq-hall-poly}) 
have a~different nature:
the first one is geometric and the second one is combinatorial. 
The following remarks show that they are ``compatible''.

\smallskip
Results presented in \cite[Section 5]{kossch} give us the following formulae for variety dimensions.
\begin{itemize}
 \item $\dim\, \mathbb V_{\alpha,\gamma}^\beta\;=\;
     \deg\,g_{\alpha,\gamma}^\beta+\deg\,a_\alpha$,
 \item $\dim\, \mathbb V_\Gamma\;=\;\deg\,g_\Gamma + \deg\,a_\alpha\;=\;
   \dim \, \mathbb V_{\alpha,\gamma}^\beta$,
 \item $\dim\, \mathbb V_\Delta\; =\;\deg\,g_\Delta + \deg \,a_\alpha \;=\;
   \dim \,\mathbb V_{\alpha,\gamma}^\beta-x(\Delta)$
\end{itemize}

\begin{rem}
Polynomials and algebras, that we call Hall polynomials and Hall algebras,
where defined and investigated in 1900 by E. Steinitz.
He described their connections with Schur functions.
 However, the results of Steinitz were forgotten.
In the nineteen fifties, Hall polynomials and algebras were defined by 
P.\ Hall  for finite abelian $p$-groups.
In \cite{hall}, P.\ Hall gave only a summary of this theory. 
His work was continued by J.\ A.\ Green \cite{green}
and T.\ Klein \cite{klein}. The reader is referred to \cite{macd}
for more information about Hall polynomials and algebras and for their
connections with symmetric functions.
\end{rem}

\bigskip
\subsection{Geometric properties of $\mathbb V_{\alpha,\gamma}^\beta$}

\begin{defin}
Two diagrams of arcs and poles are said to be in {\bf arc order}
if the first is obtained from the second by a sequence of moves of type 
(A), (B), (C), or (D):

$$
\begin{picture}(60,28)(0,-5)
  \put(0,0){\begin{picture}(20,8)(0,5)
      \put(0,4){\line(1,0){20}}
      \multiput(3,3)(4,0)4{$\bullet$}
      \put(10,4){\oval(4,4)[t]}
      \put(10,4){\oval(12,12)[t]}
    \end{picture}
  }
  \put(20,20){\begin{picture}(20,8)(0,5)
      \put(0,4){\line(1,0){20}}
      \multiput(3,3)(4,0)4{$\bullet$}
      \put(8,4){\oval(8,8)[t]}
      \put(12,4){\oval(8,8)[t]}
    \end{picture}
  }
  \put(25,15){\line(-1,-1){9}}
  \put(23,7){\makebox(0,0){\bf\footnotesize (A)}}
  \put(17,13){\rotatebox{45}{\makebox[0mm]{$<_{\rm arc}$}}}
  \put(35,15){\line(1,-1){9}}
  \put(37,7){\makebox(0,0){\bf\footnotesize(C)}}
  \put(42,12){\rotatebox{-45}{\makebox[0mm]{$>_{\rm arc}$}}}
  \put(40,0){\begin{picture}(20,8)(0,5)
      \put(0,4){\line(1,0){20}}
      \multiput(3,3)(4,0)4{$\bullet$}
      \put(6,4){\oval(4,4)[t]}
      \put(14,4){\oval(4,4)[t]}
    \end{picture}
  }
\end{picture}
\qquad
\begin{picture}(52,28)(0,-5)
  \put(0,0){\begin{picture}(16,8)(0,5)
      \put(0,4){\line(1,0){16}}
      \multiput(3,3)(4,0)3{$\bullet$}
      \put(6,4){\oval(4,4)[t]}
      \put(12,4){\line(0,1){7}}
    \end{picture}
  }
  \put(18,20){\begin{picture}(16,8)(0,5)
      \put(0,4){\line(1,0){16}}
      \multiput(3,3)(4,0)3{$\bullet$}
      \put(8,4){\oval(8,8)[t]}
      \put(8,4){\line(0,1){7}}
    \end{picture}
  }
  \put(36,0){\begin{picture}(16,8)(0,5)
      \put(0,4){\line(1,0){16}}
      \multiput(3,3)(4,0)3{$\bullet$}
      \put(10,4){\oval(4,4)[t]}
      \put(4,4){\line(0,1){7}}
    \end{picture}
  }
  \put(23,15){\line(-1,-1){9}}
  \put(21,7){\makebox(0,0){\bf\footnotesize (B)}}
  \put(15,13){\rotatebox{45}{\makebox[0mm]{$<_{\rm arc}$}}}
  \put(29,15){\line(1,-1){9}}
  \put(31,7){\makebox(0,0){\bf\footnotesize (D)}}
  \put(36,12){\rotatebox{-45}{\makebox[0mm]{$>_{\rm arc}$}}}
\end{picture}
$$
If the arc diagrams $\Delta$ and $\Delta'$ are in relation, 
we write $\Delta\arcleq \Delta'$.
\end{defin}

\medskip
The main result in \cite{kossch} states that the arc order 
and the degeneration order on arc diagrams are related:

\begin{thm}\label{thm-first-main}
Suppose that $k$ is an algebraically closed field and that $\alpha,\beta,\gamma$
are partitions with $\alpha_1\leq 2$. 
For arc diagrams $\Delta$, $\Delta'$ of type $(\alpha,\beta,\gamma)$
we have
$$\Delta\degleq \Delta' \qquad\text{if and only if}
\qquad \Delta \arcleq \Delta'$$
 where by definition $\Delta\degleq \Delta'$ if and only if
 $\mathbb{V}_{\Delta'}\subseteq \overline{\mathbb V}_\Delta$.
\end{thm}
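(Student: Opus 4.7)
The plan is to prove the two implications of the equivalence separately.

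For $\arcleq\Rightarrow\degleq$, it suffices by transitivity to treat a single instance of each of the four elementary moves (A), (B), (C), (D). Each such move is supported on a short interval of the base line, so up to an orthogonal direct summand (which stays fixed during the deformation) I would reduce to the corresponding local subdiagram. Using the explicit list of indecomposables of $\mathcal{S}_2$ from \cite{bhw}, the local configurations before and after the move can each be written as a direct sum of indecomposable embeddings, and I would write down an explicit one-parameter family $f_t:N_\alpha\to N_\beta$ whose generic fibre represents the bigger arc diagram and whose special fibre $f_0$ represents the smaller. This identifies the smaller orbit as a subset of the closure of the bigger, proving this half.

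For the converse $\degleq\Rightarrow\arcleq$, the strategy is to use numerical orbit invariants on $\mathbb{V}_{\alpha,\gamma}^\beta$ that are upper semicontinuous under specialization. The natural candidates are the Hom-dimensions $\dim_k\Hom_{\mathcal{N}}(N_\mu,N_\beta/f(N_\alpha))$ and $\dim_k\Hom_{\mathcal{N}}(N_\mu,f(N_\alpha))$ for varying partitions $\mu$; these are constant on each orbit $\mathbb{V}_\Delta$ and can only grow on the closure $\overline{\mathbb{V}}_\Delta$. A first application yields that the LR-type $\Gamma$ is preserved by $\degleq$, so one is reduced to comparing orbits inside a fixed $\mathbb{V}_\Gamma$. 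Within this piece, the task becomes purely combinatorial: one must show that for arc diagrams of a fixed LR-type, the arc order $\arcleq$ is recovered by the values of the chosen Hom-invariants. Upper semicontinuity then forces $\Delta\arcleq\Delta'$ whenever $\mathbb{V}_{\Delta'}\subseteq\overline{\mathbb{V}}_\Delta$.

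The main obstacle lies in the converse direction, specifically in exhibiting, for each pair of $\arcleq$-incomparable arc diagrams of the same LR-type, an explicit test partition $\mu$ whose Hom-dimension distinguishes them. Because each of the four moves is a local rearrangement between two pairs of points $i<j$ on the base line, one expects the required test modules to be supported on short intervals that detect the configuration of arcs crossing $[i,j]$; the Klein tableau refinement of LR-tableaux should supply the right combinatorial language for constructing such $\mu$'s, since it already records precisely which row-endpoints each arc connects. The geometric half, based on explicit one-parameter families for the moves (A)--(D), together with semicontinuity of the Hom-invariants, are then routine once this combinatorial bridge between arc diagrams and Hom-invariants is in place.
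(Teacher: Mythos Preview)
The paper does not prove this theorem; it is quoted as the main result of \cite{kossch}, so there is no in-paper proof to compare against. That said, your outline has two genuine gaps in the converse direction.

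First, your chosen invariants $\dim_k\Hom_{\mathcal N}(N_\mu,N_\beta/f(N_\alpha))$ and $\dim_k\Hom_{\mathcal N}(N_\mu,f(N_\alpha))$ are constant on all of $\mathbb V_{\alpha,\gamma}^\beta$: by definition of this variety one has $f(N_\alpha)\cong N_\alpha$ and $N_\beta/f(N_\alpha)\cong N_\gamma$, so these numbers equal $\dim_k\Hom_{\mathcal N}(N_\mu,N_\alpha)$ and $\dim_k\Hom_{\mathcal N}(N_\mu,N_\gamma)$ and cannot separate any two orbits. What is actually needed are invariants of the \emph{embedding}, i.e.\ Hom-dimensions computed in the category $\mathcal S_2$ against test objects there (equivalently, quantities such as $\dim_k\big(f(N_\alpha)\cap T^iN_\beta\big)$ or $\dim_k\big(f(N_\alpha)\cap T^{-j}(0)\big)$), and the combinatorial work in \cite{kossch} consists precisely in matching these against the arc data.

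Second, your reduction ``the LR-type $\Gamma$ is preserved by $\degleq$'' is false. Moves of type (C) and (D) change the LR-type (they exchange a box $\singlebox1$ with a box $\singlebox2$ in different rows), and the Hasse diagram of $\mathcal D_{2211,32211}^{433221}$ displayed in the paper exhibits, for instance, $\Delta_{42}^0\degleq\Delta_{43}^1$ with distinct LR-types $\Gamma_{42}$ and $\Gamma_{43}$. So one cannot restrict attention to a single $\mathbb V_\Gamma$; the comparison must be carried out across the whole of $\mathbb V_{\alpha,\gamma}^\beta$. Your forward direction via one-parameter families for each move is the right idea and is indeed how this implication is handled in \cite{kossch}.
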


The Littlewood-Richardson coefficient $c_{\alpha,\gamma}^\beta$ counts the
number of LR-tableaux 
$\Gamma$ of type  $(\alpha,\beta,\gamma)$, see \cite{macd}.
It follows that in the sum (\ref{eq-hall-poly}) 
there exist exactly $c_{\alpha,\gamma}^\beta$ polynomials $g_\Gamma$ 
of degree $n_\beta-n_\alpha-n_\gamma$.  

\smallskip
Geometrically it means that there exist 
$c_{\alpha,\gamma}^\beta$ subsets 
$\mathbb V_\Gamma\subseteq \mathbb V_{\alpha,\gamma}^\beta$ with the maximal
dimension $n_\beta-n_\alpha-n_\gamma+\deg\,a_\alpha$. 
Moreover, for any such a~subset $\mathbb V_\Gamma$ we have
$$\dim\, \mathbb V_\Gamma=n_\beta-n_\alpha-n_\gamma+\deg\,a_\alpha
  =\dim\, \mathbb V_{\alpha,\gamma}^\beta$$
and $\mathbb V_{\alpha,\gamma}^\beta=\bigcup_\Gamma \overline{\mathbb V}_\Gamma,$
where the union runs over all LR-tableaux $\Gamma$ with maximal dimension.
As a~consequence we get the following fact.
\begin{lem}
\begin{enumerate}
\item  $\mathbb V_{\alpha,\gamma}^\beta$ is irreducible if and only if
  $c_{\alpha,\gamma}^\beta=1$. 
\item There exists $c_{\alpha,\gamma}^\beta$ 
  irreducible components of $\mathbb V_{\alpha,\gamma}^\beta$.
\end{enumerate}
\end{lem}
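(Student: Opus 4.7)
The plan is to identify the irreducible components of $\mathbb V_{\alpha,\gamma}^\beta$ with the closures $\overline{\mathbb V}_\Gamma$ appearing in the decomposition $\mathbb V_{\alpha,\gamma}^\beta=\bigcup_\Gamma \overline{\mathbb V}_\Gamma$ just established, proving (2) directly and then reading off (1) as a corollary.

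First I would single out, for each LR-tableau $\Gamma$, a distinguished arc diagram $\Delta_\Gamma$ of type $\Gamma$ with $x(\Delta_\Gamma)=0$. Comparing leading coefficients in the Hall polynomial identity $g_\Gamma=\sum_\Delta g_\Delta$---where $g_\Gamma$ is monic of degree $d:=n(\beta)-n(\alpha)-n(\gamma)$ and each $g_\Delta$ is monic of degree $d-x(\Delta)$---forces exactly one arc diagram of type $\Gamma$ to have zero crossings. By the dimension formula the corresponding orbit $\mathbb V_{\Delta_\Gamma}$ is the unique stratum of maximal dimension inside $\mathbb V_\Gamma$.

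Next I would verify that $\Delta_\Gamma\arcleq\Delta$ for every arc diagram $\Delta$ of type $\Gamma$. Moves (A) and (B) only reshuffle the pairings of the endpoints prescribed by $\Gamma$ and hence preserve the LR-type; iterating the reducing direction of these moves untangles any crossings and transforms $\Delta$ into $\Delta_\Gamma$. By Theorem~\ref{thm-first-main} this gives $\mathbb V_\Delta\subseteq \overline{\mathbb V}_{\Delta_\Gamma}$ for every $\Delta$ of type $\Gamma$, so $\overline{\mathbb V}_\Gamma=\overline{\mathbb V}_{\Delta_\Gamma}$. Since $G=\Aut(N_\alpha)\times\Aut(N_\beta)$ is connected (each factor is an open subset of an affine space), this closure of a single $G$-orbit is irreducible. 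To conclude (2), I would observe that any irreducible closed subset of $\mathbb V_{\alpha,\gamma}^\beta=\bigcup_\Gamma \overline{\mathbb V}_\Gamma$ lies in one of the $\overline{\mathbb V}_\Gamma$; applied to an irreducible component, maximality then forces equality. Since every $\overline{\mathbb V}_\Gamma$ has the maximal dimension $\dim \mathbb V_{\alpha,\gamma}^\beta$, no one is strictly contained in another, and distinct LR-tableaux yield distinct dense orbits $\mathbb V_{\Delta_\Gamma}$ and hence distinct closures. Thus the $\overline{\mathbb V}_\Gamma$ are exactly the $c_{\alpha,\gamma}^\beta$ irreducible components, which proves (2); and (1) follows because a noetherian topological space is irreducible if and only if it has a single irreducible component.

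The main obstacle is the arc-order minimality of $\Delta_\Gamma$ in the second step. Uniqueness of the non-crossing representative of type $\Gamma$ comes for free from the Hall-polynomial leading-coefficient comparison, but confirming that every other Klein refinement of $\Gamma$ arc-dominates $\Delta_\Gamma$ requires a constructive bubble-sort-type reduction: one must iteratively apply the LR-type-preserving moves (A) and (B) to untangle crossings while maintaining a valid Klein refinement at each step---essentially the content of the algorithm developed in Section~\ref{section-bubble-sort}.
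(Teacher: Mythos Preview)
Your proposal is correct and follows essentially the same route as the paper: the paper states the lemma ``as a consequence'' of the surrounding discussion, namely the decomposition $\mathbb V_{\alpha,\gamma}^\beta=\bigcup_\Gamma\overline{\mathbb V}_\Gamma$ into equidimensional pieces together with the irreducibility of each $\mathbb V_\Gamma$ (established immediately after via the unique crossing-free arc diagram $\Delta_\Gamma$ with $\overline{\mathbb V}_\Gamma=\overline{\mathbb V}_{\Delta_\Gamma}$), which is exactly what you spell out. Your version is simply more explicit---you supply the connectedness of $G$, the Hall-polynomial count for uniqueness of $\Delta_\Gamma$, and the distinctness of the closures---where the paper defers these points to \cite{kossch} and to the adjacent lemmas.
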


\medskip
For any LR-tableau $\Gamma$ there exits exactly one arc diagram
with no intersections, see \cite{kossch}. 
This diagram $\Delta$ satisfies
   $\overline{\mathbb V}_\Delta\cap\mathbb V_\Gamma = \mathbb V_\Gamma$.  
We deduce the following fact. 

\begin{lem}
$\mathbb V_\Gamma$ is an~irreducible set. 
\end{lem}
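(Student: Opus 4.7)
The plan is to realize $\mathbb{V}_\Gamma$ as a subset sandwiched between a $G$-orbit and its closure; irreducibility will then follow at once from standard topology.

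First, I would invoke the result recalled just before the lemma: for the LR-tableau $\Gamma$ there exists a unique arc diagram $\Delta_0$ of type $\Gamma$ with $x(\Delta_0)=0$, and it satisfies $\overline{\mathbb{V}}_{\Delta_0}\cap \mathbb{V}_\Gamma=\mathbb{V}_\Gamma$, that is, $\mathbb{V}_\Gamma\subseteq \overline{\mathbb{V}}_{\Delta_0}$. Via Theorem~\ref{thm-first-main} this says that $\Delta_0$ is the unique minimum in the arc order among arc diagrams of LR-type $\Gamma$, so every other orbit $\mathbb{V}_{\Delta}$ of type $\Gamma$ lies in $\overline{\mathbb{V}}_{\Delta_0}$.

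Next, since $\mathbb{V}_{\Delta_0}$ is a $G$-orbit with $G=\Aut_{\mathcal N}(N_\alpha)\times \Aut_{\mathcal N}(N_\beta)$ connected, the orbit map $g\mapsto g\cdot f_0$ (for any $f_0\in \mathbb{V}_{\Delta_0}$) realizes $\mathbb{V}_{\Delta_0}$ as a morphic image of a connected irreducible variety, hence $\mathbb{V}_{\Delta_0}$ is irreducible. Then the sandwich $\mathbb{V}_{\Delta_0}\subseteq \mathbb{V}_\Gamma\subseteq \overline{\mathbb{V}}_{\Delta_0}$ forces $\mathbb{V}_\Gamma$ to be irreducible: any two nonempty relatively open subsets of $\mathbb{V}_\Gamma$ extend to nonempty open subsets of $\overline{\mathbb{V}}_{\Delta_0}$ which, by irreducibility of $\overline{\mathbb{V}}_{\Delta_0}$, intersect nontrivially and therefore meet $\mathbb{V}_\Gamma$. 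The only nontrivial input—the existence of the zero-intersection diagram $\Delta_0$ inside $\mathbb{V}_\Gamma$—is already supplied by \cite{kossch}, so no further combinatorial work is required and no real obstacle remains.
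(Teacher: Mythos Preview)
Your argument is correct and follows essentially the same route as the paper: use the intersection-free arc diagram $\Delta_0$ of type $\Gamma$ to sandwich $\mathbb V_{\Delta_0}\subseteq \mathbb V_\Gamma\subseteq \overline{\mathbb V}_{\Delta_0}$, and deduce irreducibility of $\mathbb V_\Gamma$ from that of the orbit $\mathbb V_{\Delta_0}$. The paper states this deduction in one line, while you spell out why the orbit is irreducible and how the sandwich forces irreducibility; your final topological justification could be shortened to the standard fact that a set is irreducible iff its closure is.
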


Moreover $\mathbb V_{\alpha,\gamma}^\beta=\bigcup_\Delta \overline{\mathbb V}_\Delta,$
where the union runs over all arc diagrams with no intersections.

\section{Partially ordered sets}\label{section-posets}

\medskip

\medskip
Let $\mathcal D_\Gamma$ 
be the set of all arc diagrams given by an~LR-tableau $\Gamma$ with entries
at most two,
  and $\mathcal D_{\alpha,\gamma}^\beta $ 
the set of all arc diagrams of partition type
  $(\alpha,\beta,\gamma)$ with $\alpha_1\leq 2$.
We describe properties of the posets $\CD_\Gamma=(\CD_\Gamma,\leq_{\rm arc})$ 
and $\CD_{\alpha,\gamma}^\beta=(\CD_{\alpha,\gamma}^\beta,\leq_{\rm arc})$.

\smallskip
In \cite{kossch} the following theorem is proved. 

\begin{thm}\label{thm-lattice} Let $\Gamma$ be an~LR tableau with entries
  at most two.
  \begin{enumerate}
  \item In the poset $\CD_\Gamma$ there exists exactly one minimal element: 
    the arc diagram with no intersections.
  \item In the poset $\CD_\Gamma$ there exists exactly one maximal element,
    the arc diagram with the maximal number of intersections.
  \item In the poset $\CD_{\alpha,\gamma}^\beta$ 
    there exists exactly one maximal element, given by the unique
    arc diagram with the largest number of intersections.
  \item The set of minimal elements of the poset $\CD_{\alpha,\gamma}^\beta$ 
    consists of the intersection-free arc diagrams,
    they are in one-to-one correspondence with the LR-tableaux of type
    $(\alpha,\beta,\gamma)$.
  \end{enumerate} 
\end{thm}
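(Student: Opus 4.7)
The plan is to separate the four parts into two groups: parts (1) and (4) concern minima, while parts (2) and (3) concern maxima. The first group can be handled directly using existing results from \cite{kossch}, whereas the second group requires additional combinatorial work and is likely to be the main obstacle.

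First I would observe that each of the moves (A), (B), (C), (D) strictly decreases the intersection count $x(\Delta)$, since each takes a crossing configuration to a non-crossing one. Consequently a diagram $\Delta$ is minimal in either poset if and only if it is intersection-free: any crossing admits a resolving move producing a strictly smaller element, while an intersection-free diagram admits no applicable move. This identifies the minimal elements of $\CD_{\alpha,\gamma}^\beta$ with the intersection-free arc diagrams; combined with the cited result that there exists exactly one intersection-free diagram per LR-tableau, this yields part (4). For part (1), I would verify that moves (A) and (B) preserve LR-type: the ``starting'' endpoints (the upper-row endpoints) of the arcs involved are unchanged, so (A), (B) moves restrict to $\CD_\Gamma$. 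Any $\Delta\in\CD_\Gamma$ can thus be reduced by (A), (B) moves to an intersection-free member of $\CD_\Gamma$, which by the uniqueness recalled after the theorem is the same for every starting $\Delta$; this common target is the unique minimum of $\CD_\Gamma$.

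For parts (2) and (3) the plan is to dualize. A diagram should be maximal if and only if no ``reverse'' move applies---no nested pair can be crossed and no arc-pole adjacency can be flipped into a crossing. I would construct a candidate maximum explicitly: within $\CD_\Gamma$ the positions of starts, ends, and poles are fixed by $\Gamma$, and only the pairing of starts with ends varies. The maximum should be the pairing in which the $i$-th start from the left is matched with the $i$-th end from the left, so that all arcs cross whenever compatible and every pole is placed on the side from which no local reverse move can introduce a further crossing. For the larger poset $\CD_{\alpha,\gamma}^\beta$ the LR-type may also vary, but the same principle (extended to allow moves (C), (D)) singles out a distinguished ``maximally-crossed'' candidate.

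The main obstacle will be uniqueness of the maximum. A direct approach is by confluence: if two distinct maximal diagrams existed, their discrepancy would produce a local configuration (two nested arcs, or an arc--pole pair in the wrong relative order) admitting a reverse move, contradicting maximality. A more conceptual alternative is to establish a lattice structure on $(\CD_\Gamma,\arcleq)$ and $(\CD_{\alpha,\gamma}^\beta,\arcleq)$, so that the unique maximum is automatically the join of all elements; this route would likely reuse the combinatorial description of arcs from \cite{kossch}. Either way, the heart of the argument is a careful local analysis of how the four moves interact on pairs of arcs, arc--pole pairs, and short triples of strands.
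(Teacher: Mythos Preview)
The paper does not itself prove this theorem; it simply quotes the result from \cite{kossch}. So there is no in-paper proof to compare against, and your proposal must be evaluated on its own merits.

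Your treatment of parts (1) and (4) is essentially sound, but one step is underjustified. You claim each move strictly decreases $x(\Delta)$ ``since each takes a crossing configuration to a non-crossing one.'' That sentence only addresses the two strands being moved, not their interaction with every other arc or pole in the diagram. What must be checked is that resolving the crossing of $f,g$ never \emph{creates} crossings between the new strands and a third strand $h$. This is true --- a short case analysis on where the endpoints of $h$ lie relative to the four endpoints of $f,g$ shows that the crossing count of $h$ with the pair $\{f,g\}$ is preserved in all cases except one, where it drops by two --- but it is exactly this verification that does the work, and you should say so rather than gloss over it. (It is also what lies behind Lemma~\ref{lem-byone-1-non-empty} later in the paper.) Once that is in place, ``minimal $=$ intersection-free'' follows, and the uniqueness of the intersection-free diagram per LR-type (which you correctly cite) finishes (1) and (4).

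For parts (2) and (3), your plan is reasonable in outline but the execution is incomplete in two places. First, your candidate maximum in $\CD_\Gamma$ (``match the $i$-th start from the left with the $i$-th end from the left'') does not yet handle poles: you must decide which $1$-positions become pole feet and which become arc targets, and then verify that the resulting matching actually satisfies source $>$ target for every arc; this needs an argument using the LR constraints on $\Gamma$. Second, your uniqueness sketch (``if two distinct maxima existed, their discrepancy would produce a local configuration admitting a reverse move'') is not a proof: two distinct diagrams can differ globally without either exhibiting a nested pair or an arc--pole adjacency at any particular spot, so you need a genuine argument that a diagram admitting no reverse-(A) and no reverse-(B) move is determined uniquely by $\Gamma$. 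The route that works is to characterise such diagrams directly: no reverse-(B) forces every pole to sit at a $1$-position at least as large as every arc target, which pins down the pole set; no reverse-(A) then forces the unique non-nesting matching between the remaining sources and targets. The analogous argument with (C), (D) added handles $\CD_{\alpha,\gamma}^\beta$. Your ``confluence'' and ``lattice'' alternatives would each require substantially more machinery than this direct characterisation.
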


\begin{cor}
  \begin{enumerate}
  \item The open strata in 
    $\{\mathbb V_\Delta:\Delta\in \mathcal D_{\alpha,\gamma}^\beta\}$ 
    are the strata of maximal dimension.  The number of such strata
    is $c_{\alpha,\gamma}^\beta$, they are in one-to-one
    correspondence with the intersection-free arc diagrams.
  \item There is a unique closed stratum, it has minimal dimension
    and is given by the unique arc diagram with the maximal number
    of intersections.
  \end{enumerate}
\end{cor}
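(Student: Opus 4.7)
The plan is to combine three ingredients already at hand: the stratification proposition (each orbit is locally closed with closure a union of strata of strictly lower dimension), Theorem~\ref{thm-first-main} identifying the degeneration order $\degleq$ with the arc order $\arcleq$, and Theorem~\ref{thm-lattice} describing the extremal elements of the poset $(\mathcal D_{\alpha,\gamma}^\beta,\arcleq)$. The dimension formula $\dim\mathbb V_\Delta=\dim\mathbb V_{\alpha,\gamma}^\beta-x(\Delta)$ from the preceding subsection then converts combinatorial extremality into dimensional extremality.

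First I would translate topological openness and closedness of a stratum into extremality in the poset. Since $\mathbb V_{\alpha,\gamma}^\beta$ is the finite disjoint union of the locally closed strata $\mathbb V_\Delta$, the complement of $\mathbb V_\Delta$ equals $\bigcup_{\Delta'\neq\Delta}\mathbb V_{\Delta'}$, whose closure is the finite union $\bigcup_{\Delta'\neq\Delta}\overline{\mathbb V}_{\Delta'}$. Hence $\mathbb V_\Delta$ is open exactly when no $\overline{\mathbb V}_{\Delta'}$ with $\Delta'\neq\Delta$ contains $\mathbb V_\Delta$; by definition of $\degleq$ this says $\Delta$ is minimal. Dually $\mathbb V_\Delta$ is closed iff $\overline{\mathbb V}_\Delta=\mathbb V_\Delta$, which holds iff no $\Delta'\neq\Delta$ satisfies $\Delta\degleq\Delta'$, i.e.\ iff $\Delta$ is maximal. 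Using Theorem~\ref{thm-first-main} I may replace $\degleq$ by $\arcleq$ throughout.

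For part (1), Theorem~\ref{thm-lattice}(4) identifies the minimal elements of $\mathcal D_{\alpha,\gamma}^\beta$ as the intersection-free arc diagrams, which are in bijection with the LR-tableaux of type $(\alpha,\beta,\gamma)$; there are $c_{\alpha,\gamma}^\beta$ of them. The dimension formula shows these strata are exactly those of maximal dimension, since $\dim\mathbb V_\Delta$ is maximal precisely when $x(\Delta)=0$. For part (2), Theorem~\ref{thm-lattice}(3) provides a unique maximal arc diagram, namely the one with the largest number of intersections; by the same dimension formula its stratum has minimal dimension.

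I do not foresee a serious obstacle: the argument is a clean repackaging of earlier results. The one subtlety worth emphasizing is the orientation of the degeneration order, namely that $\Delta\degleq\Delta'$ means $\mathbb V_{\Delta'}\subseteq\overline{\mathbb V}_\Delta$, so that minimal arc diagrams correspond to the open (large) strata and the unique maximal arc diagram to the unique closed (small) stratum.
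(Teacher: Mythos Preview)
Your argument is correct and makes explicit precisely what the paper leaves implicit: the corollary is stated there without proof, as an immediate consequence of Theorem~\ref{thm-lattice}, Theorem~\ref{thm-first-main}, and the dimension formula $\dim\mathbb V_\Delta=\dim\mathbb V_{\alpha,\gamma}^\beta-x(\Delta)$. Your translation of openness/closedness into $\degleq$-minimality/maximality, followed by the identification $\degleq\;=\;\arcleq$ and the description of the extremal arc diagrams, is exactly the intended route.
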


By identifying the points in $\mathcal D_{\alpha,\gamma}^\beta$ which correspond
to the same LR-tableau, we obtain the coarser poset 
$\overline{\mathcal D}_{\alpha,\gamma}^\beta$ on the set of LR-tableaux of type
$(\alpha,\beta,\gamma)$.
Proposition~\ref{prop-dbar} below shows that there are several equivalent
candidates for a partial ordering on $\barD$.


\subsection{Two questions about saturated chains}

\begin{defin}
A chain in a poset is {\bf saturated} if it has no refinement.
\end{defin}

\begin{qu}
In $\mathcal D_{\alpha,\gamma}^\beta$, do all saturated chains 
have the same length?
\end{qu}

\begin{qu}[Question 2:]
Suppose $\Delta<\Delta'$ in $\mathcal D_{\alpha,\gamma}^\beta$. 
Is there a chain from $\Delta$ to $\Delta'$ 
where subsequent strata have dimension difference equal to one?
\end{qu}

The example on page~\pageref{figure2}
shows that the answer to both questions is NO.
Take $\alpha=(2,2,1,1)$, $\beta=(4,3,3,2,2,1)$, and $\gamma=(3,2,2,1,1)$.

\smallskip
Consider the five
arc diagrams labelled $\Delta_{43}^x$. 
There are two saturated chains from $\Delta_{43}^0$ to $\Delta_{43}^{3}$,
they have length 2 and 3, respectively. 
Similarly, there are two saturated chains from $\Delta_{32}^0$ to 
$\Delta_{43}^3$, also of length 2 and 3, respectively.
Note that for each direct predecessor of $\Delta_{43}^{2a}$ and
$\Delta_{42}^2$,
the dimension of the corresponding stratum decreases by two.

\bigskip
In the next sections we will obtain an affirmative answer 
to both questions in the case where  $\beta\setminus\gamma$
is a vertical strip (which excludes double poles in any of the arc diagrams).

\setlength\unitlength{.8mm}
\begin{figure}[t!]\label{figure2}
  $$
  \begin{picture}(110,170)
    \put(0,15){\framebox(22,18)[t]{%
        \begin{picture}(20,12)(0,5)
          \put(3,14){\sb{\Delta_{43}^0}}
          \put(0,4){\line(1,0){20}}
          \multiput(4,3)(4,0)4{\sb\bullet}
          \put(4,0){\sb 4}
          \put(8,0){\sb 3}
          \put(12,0){\sb 2}
          \put(16,0){\sb 1}
          \put(6,4){\oval(4,4)[t]}
          \put(10,4){\oval(4,4)[t]}
          \put(12,4){\line(0,1){10}}
          \put(16,4){\line(0,1){10}}
    \end{picture}}}
    \put(43,2){\framebox(22,18)[t]{%
        \begin{picture}(20,12)(0,5)
          \put(3,14){\sb{\Delta_{42}^0}}
          \put(0,4){\line(1,0){20}}
          \multiput(4,3)(4,0)4{\sb\bullet}
          \put(4,0){\sb 4}
          \put(8,0){\sb 3}
          \put(12,0){\sb 2}
          \put(16,0){\sb 1}
          \put(6,4){\oval(4,4)[t]}
          \put(14,4){\oval(4,4)[t]}
          \put(8,4){\line(0,1){10}}
          \put(12,4){\line(0,1){10}}
    \end{picture}}}
    \put(47,28){\framebox(22,18)[t]{%
        \begin{picture}(20,12)(0,5)
          \put(3,14){\sb{\Delta_{33}^0}}
          \put(0,4){\line(1,0){20}}
          \multiput(4,3)(4,0)4{\sb\bullet}
          \put(4,0){\sb 4}
          \put(8,0){\sb 3}
          \put(12,0){\sb 2}
          \put(16,0){\sb 1}
          \put(10,4){\oval(4,4)[t]}
          \put(10,4){\oval(4,6)[t]}
          \put(4,4){\line(0,1){8}}
          \put(16,4){\line(0,1){10}}
    \end{picture}}}
    \put(90,15){\framebox(22,18)[t]{%
        \begin{picture}(20,12)(0,5)
          \put(3,14){\sb{\Delta_{32}^0}}
          \put(0,4){\line(1,0){20}}
          \multiput(4,3)(4,0)4{\sb\bullet}
          \put(4,0){\sb 4}
          \put(8,0){\sb 3}
          \put(12,0){\sb 2}
          \put(16,0){\sb 1}
          \put(14,4){\oval(4,4)[t]}
          \put(10,4){\oval(4,4)[t]}
          \put(4,4){\line(0,1){8}}
          \put(8,4){\line(0,1){10}}
    \end{picture}}}
    \put(25,70){\framebox(22,18)[t]{%
        \begin{picture}(20,12)(0,5)
          \put(3,14){\sb{\Delta_{43}^1}}
          \put(0,4){\line(1,0){20}}
          \multiput(4,3)(4,0)4{\sb\bullet}
          \put(4,0){\sb 4}
          \put(8,0){\sb 3}
          \put(12,0){\sb 2}
          \put(16,0){\sb 1}
          \put(10,4){\oval(4,4)[t]}
          \put(8,4){\oval(8,8)[t]}
          \put(8,4){\line(0,1){8}}
          \put(16,4){\line(0,1){10}}
    \end{picture}}}
    \put(65,70){\framebox(22,18)[t]{%
        \begin{picture}(20,12)(0,5)
          \put(3,14){\sb{\Delta_{33}^1}}
          \put(0,4){\line(1,0){20}}
          \multiput(4,3)(4,0)4{\sb\bullet}
          \put(4,0){\sb 4}
          \put(8,0){\sb 3}
          \put(12,0){\sb 2}
          \put(16,0){\sb 1}
          \put(10,4){\oval(4,4)[t]}
          \put(12,4){\oval(8,8)[t]}
          \put(4,4){\line(0,1){8}}
          \put(12,4){\line(0,1){10}}
    \end{picture}}}
    \put(45,110){\framebox(22,18)[t]{%
        \begin{picture}(20,12)(0,5)
          \put(3,14){\sb{\Delta_{43}^{2b}}}
          \put(0,4){\line(1,0){20}}
          \multiput(4,3)(4,0)4{\sb\bullet}
          \put(4,0){\sb 4}
          \put(8,0){\sb 3}
          \put(12,0){\sb 2}
          \put(16,0){\sb 1}
          \put(10,4){\oval(4,4)[t]}
          \put(10,4){\oval(12,12)[t]}
          \put(12,4){\line(0,1){10}}
          \put(8,4){\line(0,1){10}}
    \end{picture}}}
    \put(0,110){\framebox(22,18)[t]{%
        \begin{picture}(20,12)(0,5)
          \put(3,14){\sb{\Delta_{43}^{2a}}}
          \put(0,4){\line(1,0){20}}
          \multiput(4,3)(4,0)4{\sb\bullet}
          \put(4,0){\sb 4}
          \put(8,0){\sb 3}
          \put(12,0){\sb 2}
          \put(16,0){\sb 1}
          \put(6,4){\oval(4,4)[t]}
          \put(12,4){\oval(8,8)[t]}
          \put(12,4){\line(1,4){2.5}}
          \put(12,4){\line(-1,4){2.5}}
    \end{picture}}}
    \put(90,110){\framebox(22,18)[t]{%
        \begin{picture}(20,12)(0,5)
          \put(3,14){\sb{\Delta_{42}^2}}
          \put(0,4){\line(1,0){20}}
          \multiput(4,3)(4,0)4{\sb\bullet}
          \put(4,0){\sb 4}
          \put(8,0){\sb 3}
          \put(12,0){\sb 2}
          \put(16,0){\sb 1}
          \put(14,4){\oval(4,4)[t]}
          \put(8,4){\oval(8,8)[t]}
          \put(8,4){\line(1,4){2.5}}
          \put(8,4){\line(-1,4){2.0}}
    \end{picture}}}
    \put(45,150){\framebox(22,18)[t]{%
        \begin{picture}(20,12)(0,5)
          \put(3,14){\sb{\Delta_{43}^3}}
          \put(0,4){\line(1,0){20}}
          \multiput(4,3)(4,0)4{\sb\bullet}
          \put(4,0){\sb 4}
          \put(8,0){\sb 3}
          \put(12,0){\sb 2}
          \put(16,0){\sb 1}
          \put(8,4){\oval(8,8)[t]}
          \put(12,4){\oval(8,8)[t]}
          \put(12,4){\line(0,1){10}}
          \put(8,4){\line(0,1){10}}
    \end{picture}}}
    \put(0,15){\framebox(22,18)[t]{%
        \begin{picture}(20,12)(0,5)
          \put(3,14){\sb{\Delta_{43}^0}}
          \put(0,4){\line(1,0){20}}
          \multiput(4,3)(4,0)4{\sb\bullet}
          \put(4,0){\sb 4}
          \put(8,0){\sb 3}
          \put(12,0){\sb 2}
          \put(16,0){\sb 1}
          \put(6,4){\oval(4,4)[t]}
          \put(10,4){\oval(4,4)[t]}
          \put(12,4){\line(0,1){10}}
          \put(16,4){\line(0,1){10}}
    \end{picture}}}
    \put(11,40){\vector(0,1){65}}
    \put(101,40){\vector(0,1){65}}
    \put(56,135){\vector(0,1){10}}
    \put(22,135){\vector(2,1){20}}
    \put(90,135){\vector(-2,1){20}}
    \put(16,40){\vector(1,2){12}}
    \put(96,40){\vector(-1,2){12}}
    \put(43,95){\vector(1,2){5}}
    \put(67,95){\vector(-1,2){5}}
    \put(47,52){\vector(-1,2){6}}
    \put(61,52){\vector(1,2){6}}
     \put(65,22){\beginpicture\setcoordinatesystem units <.8mm,.8mm>
      \setquadratic
      \plot 0 0  23 12  31 30 /
      \endpicture}
     \put(96,52){\vector(0,1){53}}
     \put(47,22){\beginpicture\setcoordinatesystem units <.8mm,.8mm>
      \setquadratic
      \plot 0 0  -23 12  -31 30 /
      \endpicture}
     \put(16,52){\vector(0,1){53}}
  \end{picture}
  $$
  \centerline{{\it Example:} Hasse diagram for 
    $\mathcal D_{\alpha,\gamma}^\beta$ where 
    $\alpha=(2211),\beta=(433221),\gamma=(32211)$}
\end{figure}

\subsection{Sequences of sources and targets}

Formally, an arc diagram is a finite set of arcs and poles in the 
Poincar\'e half plane.  We assume that all end points are natural numbers 
(arranged from right to left) and permit multiple arcs and poles.\smallskip

We call the left end of an~arc the {\bf source} and the right end  the 
{\bf target}. \smallskip

We view a~pole in an~arc diagram as an~arc with source equal to $\infty$ 
and target equal  to the end point of the pole.\smallskip

With an~arc diagram we associate a~chain of pairs of numbers in the following way.\smallskip
 
 Starting from the left side of the diagram if we meet a~target of an~arc $f$ we write
 $(m,n)$, where $n$ is the target of $f$ 
 and $m$ is the source of $f$. If two or more arcs have the same target $n$ 
we arrange the corresponding pairs
 $(m_1,n),\ldots,(m_x,n)$ in such a~way that $m_1\leq m_2\leq \ldots\leq m_x$.
 
 \begin{ex}
 With the following diagram 
 $$
  \begin{picture}(32,15)(0,3)
\put(0,4){\line(1,0){32}}
\multiput(3,3)(4,0)7{$\bullet$}
\put(14,4){\oval(20,20)[t]}
\put(20,4){\oval(16,16)[t]}
\put(14,4){\oval(12,12)[t]}
\put(16,4){\line(0,1){15}}
\put(4,1){\makebox[0mm]{$\scriptstyle 7$}}
\put(8,1){\makebox[0mm]{$\scriptstyle 6$}}
\put(12,1){\makebox[0mm]{$\scriptstyle 5$}}
\put(16,1){\makebox[0mm]{$\scriptstyle 4$}}
\put(20,1){\makebox[0mm]{$\scriptstyle 3$}}
\put(24,1){\makebox[0mm]{$\scriptstyle 2$}}
\put(28,1){\makebox[0mm]{$\scriptstyle 1$}}
\end{picture}
  $$
  we associate the chain:
  
  $$(\sinfty,4),(6,3),(7,2),(5,1). $$
 \end{ex}

 
 \begin{lem}
  Two arcs $f$ and $g$, with corresponding chain of pairs $(m,n),(k,r)$
  have a~crossing if and only if $r<n<k<m$.
 \label{lem-order}\end{lem}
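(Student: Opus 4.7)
The plan is to reduce the combinatorial statement to the standard topological fact that two Jordan arcs in the closed upper half-plane with four distinct endpoints on the real axis cross transversally in a single point if and only if those four endpoints interleave along the axis. First I would fix notation: points on the baseline are labelled by natural numbers decreasing from left to right, and a pole is treated as an arc with source $\infty$, where $\infty$ is declared larger than every natural number. Under this convention, for any arc $f$ with source $m$ and target $n$ one automatically has $m>n$.

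Next I would unpack what the chain construction contributes. Targets are listed in the order in which they are encountered while scanning from left to right, which with our labelling is the order of strictly decreasing target label, except that several arcs with a common target are grouped together and ordered by non-decreasing source. In particular, if $(m,n)$ precedes $(k,r)$ in the chain then $n\geq r$. The case $n=r$ happens exactly when the two arcs share a target; they then meet only at the common endpoint and produce no proper crossing, and at the same time the condition $r<n<k<m$ fails. Hence we may assume $n>r$.

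The rest is a short case analysis. With $m>n>r$ and $k>r$, interleaving of $\{m,n\}$ and $\{k,r\}$ on the baseline is equivalent to asking whether exactly one of $k$ and $r$ lies in the open interval $(n,m)$. Since $r<n$, the point $r$ does not lie in $(n,m)$, so interleaving is equivalent to $k\in(n,m)$, i.e.\ $n<k<m$. Combining with $r<n$ gives precisely $r<n<k<m$, which establishes both implications of the lemma.

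The only delicate issue is the uniform treatment of degenerate configurations, namely poles and coincident endpoints. The convention $\infty>n$ for every natural number $n$ absorbs the pole case into the same inequality analysis, and for the remaining borderline configurations (shared source, shared target, or source of one arc equal to target of the other) one checks directly that no proper crossing can occur and that at least one of the strict inequalities $r<n$, $n<k$, $k<m$ must fail. I do not expect this bookkeeping to be a real obstacle; the substantive content of the lemma is simply the interleaving criterion together with the observation that the chain ordering forces $n\geq r$.
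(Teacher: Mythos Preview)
Your proposal is correct and is precisely the kind of argument the paper has in mind: the authors' own proof consists of the single word ``Straightforward.'' You have supplied the details they omit, namely that the chain convention forces $n\geq r$, and that under this constraint the standard interleaving criterion for two arcs (or an arc and a pole, via the $\infty$ convention) in the upper half-plane reduces exactly to $r<n<k<m$. Your handling of the degenerate cases (shared targets, shared sources, poles) is careful and accurate; nothing further is needed.
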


 \begin{proof}
  Straightforward.
 \end{proof}
 
 \begin{cor}
  Let $\Delta$ be an~arc diagram with corresponding sequence of sources and targets:
  $$(m_1,n_1),\ldots,(m_x,n_x).$$
  If $m_i>n_j$, for all $i,j\in\{1,\ldots,x\}$, then the following conditions are equivalent
  \begin{enumerate}
   \item $\Delta$ is dominant, i.e.\ there are no crossings in $\Delta$;
   \item $m_1\leq m_2\leq\ldots\leq m_x$.
  \end{enumerate}

 \end{cor}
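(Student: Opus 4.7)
The plan is a direct contrapositive argument using the crossing criterion of Lemma~\ref{lem-order}. The key set-up observation is that, by construction of the chain of sources and targets, the target coordinates are weakly decreasing, $n_1 \geq n_2 \geq \cdots \geq n_x$, with the tie-breaking rule guaranteeing $m_i \leq m_j$ whenever $n_i = n_j$ and $i < j$.

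For the implication (2) $\Rightarrow$ (1), I would fix $i < j$. If $n_i = n_j$ the two arcs share a target and cannot cross. Otherwise $n_i > n_j$, and Lemma~\ref{lem-order} says the arcs cross iff $n_j < n_i < m_j < m_i$; the monotonicity hypothesis $m_i \leq m_j$ immediately rules out the outermost inequality $m_j < m_i$. Hence no pair of arcs crosses, so $\Delta$ is dominant.

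For (1) $\Rightarrow$ (2), I would argue by contraposition: suppose the source sequence fails to be weakly increasing, so there exist $i < j$ with $m_i > m_j$. The tie-breaking rule excludes $n_i = n_j$, and since targets are weakly decreasing this forces $n_i > n_j$. To invoke Lemma~\ref{lem-order} and produce a crossing it remains only to verify the middle inequality $n_i < m_j$, and this is precisely where the standing hypothesis $m_k > n_\ell$ for all $k, \ell$ enters (applied with $k = j$ and $\ell = i$). The four inequalities $n_j < n_i < m_j < m_i$ all hold, so arcs $i$ and $j$ cross and $\Delta$ is not dominant.

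There is essentially no obstacle beyond bookkeeping: the hypothesis $m_k > n_\ell$ is exactly the extra data needed to supply the middle inequality in the crossing criterion that would otherwise not follow from $m_i > m_j$ alone. The argument also handles poles ($m_i = \infty$) correctly once one agrees that $\infty$ is larger than every finite source, which is consistent with the conventions of the preceding lemma.
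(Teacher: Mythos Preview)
Your proof is correct and follows exactly the approach the paper intends: the corollary is stated immediately after Lemma~\ref{lem-order} with no proof given, since it is meant to follow directly from that lemma by the contrapositive argument you spell out. Your identification of where the hypothesis $m_k>n_\ell$ enters (to supply the middle inequality $n_i<m_j$ in the crossing criterion) is precisely the point of the corollary.
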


\subsection{The Bruhat order}

Fix an~LR-tableau $\Gamma$ of type $(\alpha,\beta,\gamma)$ with $\alpha_1\leq 2$. 
Let $x$ denote the number of boxes with entry $1$.
Assume that $\Gamma$ satisfies the following conditions:
\begin{enumerate}
 \item The number of boxes with entry $2$ is equal to $x$ or to $x-1$,
   i.e.\ there is at most one pole in the corresponding arc diagram.
 \item In any row 
   of $\Gamma$ there is at most one non-empty box, i.e.\
   at each point in the corresponding arc diagram, there is at most 
   one arc or pole.
 \item If $j$ is the number of a row with entry $2$ 
   and $i$ is the number of a row with entry $1$, then $j>i$, i.e.\ 
   each starting point of an arc is on the left of every end point 
   of an arc or pole.
\end{enumerate}

We prove that in this case the poset $\mathcal{D}_\Gamma$ 
is related to the Bruhat order of a~symmetric group.

\begin{lem}
 Let $\Gamma$ be an~LR-tableau satisfying the conditions {\rm 1--3}. 
 There is a~bijection between the set $\mathcal{D}_\Gamma$
 and the set $S_x$ of all permutations of $x$ elements.
\label{lem-bijection}\end{lem}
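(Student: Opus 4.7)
The plan is to construct an explicit bijection using the source--target encoding from the preceding subsection.  Under the three hypotheses on $\Gamma$, extract the set of target rows (those carrying a $\singlebox 1$) and list them in decreasing order as $t_1>t_2>\cdots>t_x$; analogously, list the source rows (those carrying a $\singlebox 2$) in decreasing order, and if condition~(1) produces only $x-1$ source rows, prepend $\sinfty$ to account for the unique pole.  The resulting sequence $s_1>s_2>\cdots>s_x$ then has exactly $x$ terms by (1) and (2), and condition~(3) guarantees that every finite $s_j$ exceeds every $t_i$.

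Next, I would define $\phi\colon\CD_\Gamma\to S_x$ as follows.  Any $\Delta\in\CD_\Gamma$ must, by condition~(2), incide with each target $t_i$ at exactly one arc endpoint (or pole), whose source lies in $\{s_1,\ldots,s_x\}$; moreover, since there are exactly $x$ arcs/poles in total and $x$ available sources, the assignment $t_i\mapsto\text{(source of the arc/pole at }t_i)$ is a bijection from $\{t_1,\ldots,t_x\}$ to $\{s_1,\ldots,s_x\}$.  Writing this bijection as $t_i\mapsto s_{\pi(i)}$ defines $\phi(\Delta)=\pi\in S_x$.  Equivalently, $\pi$ is read directly off the sequence of sources and targets of the previous subsection, which for $\Delta$ takes the form $(s_{\pi(1)},t_1),\ldots,(s_{\pi(x)},t_x)$; note that condition~(2) removes any ambiguity about ordering within a common target, since each target carries only one arc.

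For the inverse $\psi\colon S_x\to\CD_\Gamma$, given $\pi\in S_x$ I would form the diagram $\Delta_\pi$ by drawing, for each $i\in\{1,\ldots,x\}$, an arc from $s_{\pi(i)}$ to $t_i$, with the convention that an arc with source $\sinfty$ is a pole at $t_i$.  Conditions~(1)--(3) ensure that $\Delta_\pi$ is well-formed and of type $\Gamma$: every source and every target is used exactly once, finite sources lie strictly to the left of targets, and the number of endpoints at each row matches $\beta'_i-\gamma'_i$ as dictated by $\Gamma$.  A direct check from the definitions shows $\phi\circ\psi=\id$ and $\psi\circ\phi=\id$.

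The only real obstacle is bookkeeping around the possibility of a pole: one must make the $\sinfty$ convention align with the count of $\singlebox 2$ entries permitted by (1), and one must use (2) to rule out multiplicity issues in the target sequence.  Once those are handled, the bijection is essentially tautological.
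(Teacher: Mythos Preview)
Your proposal is correct and follows essentially the same approach as the paper: both encode an arc diagram by its sequence of sources and targets and observe that, under conditions 1--3, the targets are fixed by $\Gamma$ while the sources are freely permuted, yielding the bijection with $S_x$.  Your version is in fact more explicit than the paper's---you construct the inverse map $\psi$ and verify both compositions, whereas the paper simply writes down the forward map and declares the bijection ``easy to see''---but the underlying idea is identical.
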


\begin{proof}
Let $\Delta\in\mathcal{D}_\Gamma$.  
Since $\Gamma$ satisfies condition 3,
the corresponding sequence of sources and targets of $\Delta$:
  $$(m_1,n_1),\ldots,(m_x,n_x)$$ 
  is such that $m_i>n_j$, for all $i,j\in\{1,\ldots,x\}$.  
  Moreover, the numbers $m_1,n_1,\ldots,\allowbreak m_x,n_x$ are pairwise different,
  because $\Gamma$ satisfies conditions 1 and 2.
  
  With the sequence $$(m_1,n_1),\ldots,(m_x,n_x)$$ we associate
  the permutation
    $$ (m_1,m_2,\ldots,m_x)=\left(
    \begin{array}{cccc}
      m_{\sigma(1)}&m_{\sigma(2)}&\ldots&m_{\sigma(x)}\\
      m_1&m_2&\ldots&m_x
    \end{array}
    \right),$$
    where $\sigma$ is a~permutation such that $m_{\sigma(1)}<m_{\sigma(2)}<\ldots<m_{\sigma(x)}$.
It is easy to see that this association defines the required bijection.
\end{proof}

\begin{thm} Let $\Gamma$ be an~LR-tableau satisfying  conditions {\rm 1--3}. 
 The poset $(\mathcal{D}_\Gamma,\leq_{\rm arc})$ is isomorphic to the Bruhat order on $S_x$.
\label{thm-bruhat}\end{thm}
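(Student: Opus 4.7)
The plan is to argue that the bijection $\phi\colon\mathcal{D}_\Gamma\to S_x$ of Lemma~\ref{lem-bijection} is a poset isomorphism, by matching the covering relations on both sides.

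The first step is to translate crossings into inversions. Under condition~3 every source in $\Gamma$ strictly exceeds every target, so the corollary to Lemma~\ref{lem-order} applies: for $\Delta\in\mathcal{D}_\Gamma$ with chain $(m_1,n_1),\dots,(m_x,n_x)$ the arcs at positions $i<j$ cross precisely when $m_i>m_j$. Consequently the crossing number $x(\Delta)$ coincides with the number of inversions of the permutation $\phi(\Delta)$, i.e.\ with the Coxeter length $\ell(\phi(\Delta))$.

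The second step is to identify which of the moves (A)--(D) actually occur as covering relations inside $\mathcal{D}_\Gamma$. Moves (C) and (D) interchange a source-endpoint with a target-endpoint (compare the ``before'' and ``after'' pictures), so they alter the multiset of rows carrying the entry $\singlebox 2$ and therefore change the LR-tableau; they cannot occur as moves within $\mathcal{D}_\Gamma$. Conditions~1 and 2 (at most one pole, and at most one arc or pole at each point) guarantee that the pictured configurations for (A) and (B) are the only ones that can appear in $\mathcal{D}_\Gamma$. Move (A) applied to a nested pair of arcs at chain positions $i<j$ (so $m_i<m_j$) exchanges their sources, and hence acts on $\phi(\Delta)$ as the transposition of the values at positions $i$ and $j$; move (B) performs the analogous transposition when one of the two endpoints is the pole (the value $\infty$).

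A short count shows that such a swap changes the number of inversions by
\[
 1 \;+\; 2\cdot\#\bigl\{k\colon i<k<j \text{ and } m_i<m_k<m_j\bigr\}.
\]
The local ``$+1$ crossing'' effect pictured for (A) and (B) therefore occurs exactly when this set is empty, which is precisely the classical criterion for a covering relation in the strong Bruhat order on $S_x$: $\pi\lessdot \pi\cdot(i,j)$ iff $\pi(i)<\pi(j)$ and no intermediate $k$ satisfies $\pi(i)<\pi(k)<\pi(j)$. Hence $\phi$ carries covers of $(\mathcal{D}_\Gamma,\leq_{\rm arc})$ bijectively onto covers of Bruhat order, and since each of the two partial orders is the transitive closure of its covers, $\phi$ is an isomorphism of posets.

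The main obstacle is the second step: verifying that under conditions 1--3 no moves outside of (A), (B) contribute to the order on $\mathcal{D}_\Gamma$, and that the geometric ``no interfering intermediate arc'' condition for the pictured moves really matches the Bruhat covering condition. Once that bookkeeping is secured, the rest of the argument is essentially a dictionary between the combinatorics of arcs and the algebra of $S_x$.
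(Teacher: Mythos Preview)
Your argument contains the right ingredients and reaches the correct conclusion, but the final logical step via covering relations has a small gap. You assert that ``$\phi$ carries covers of $(\mathcal D_\Gamma,\leq_{\rm arc})$ bijectively onto covers of Bruhat order,'' yet what you have actually established is only that an (A)/(B) move with empty intermediate set corresponds to a Bruhat cover. You have not shown that an (A)/(B) move with \emph{non}-empty intermediate set fails to be a cover in the arc order; a priori such a move might be indecomposable among (A)/(B) moves in $\mathcal D_\Gamma$ even though the corresponding transposition is decomposable in Bruhat order. (That decomposability in the arc order is precisely the content of Lemma~\ref{lem-byone-1-non-empty}, which the paper proves \emph{after} this theorem.)

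The fix is to bypass covers and argue directly with generating relations, which is what the paper does. You already have everything needed: each (A)/(B) move corresponds under $\phi$ to a transposition that strictly decreases length (your step~1 together with your inversion-count formula), and by the characterisation of Bruhat order cited from B\'ona, any length-decreasing transposition is a Bruhat relation. Conversely every Bruhat cover is a transposition, which pulls back to an (A)/(B) move. Hence $\phi$ and $\phi^{-1}$ each preserve the generating relations of the respective orders, so they preserve the orders themselves. The cover-to-cover correspondence then follows as a consequence rather than serving as the mechanism of proof.

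The paper's own proof is a two-line version of this generator-matching argument: it simply observes that performing an (A)/(B) move is the same as applying a transposition that removes an inversion, and invokes B\'ona's definition. It does not explicitly rule out moves (C) and (D) within $\mathcal D_\Gamma$, nor does it derive the $1+2\#\{\cdots\}$ formula; your handling of those points is more careful and would make a worthwhile addition.
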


\begin{proof}
Let $\Delta$ be an~arc diagram with corresponding sequence of sources and targets:
  $$(m_1,n_1),\ldots,(m_x,n_x).$$
Note that we can do the move (A) or (B) if and only if there exists a~permutation $(m_i,m_j)$ that is an~inversion
(i.e. $i<j$ but $m_i>m_j$).
Therefore the moves of types (A) and (B) in the arc diagram correspond (under the bijection described
in the proof of Lemma \ref{lem-bijection}) to the inversions in $S_x$. 
By \cite[Definition 7.16]{bona}, the  Bruhat order on $S_x$ is generated by inversions.
We are done.
\end{proof}

\begin{cor}\label{cor-saturated-chains}
 Let $\Gamma$ be an~LR-tableau satisfying conditions {\rm 1--3}.
 In the poset $\mathcal{D}_\Gamma$ all saturated chains have the same length.
\end{cor}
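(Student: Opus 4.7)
The plan is to reduce the statement to the classical fact that the (strong) Bruhat order on a symmetric group is a graded poset.

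First, by Theorem~\ref{thm-bruhat}, the poset $(\mathcal{D}_\Gamma,\leq_{\rm arc})$ is isomorphic to $(S_x,\leq_{\rm Bruhat})$ via the bijection of Lemma~\ref{lem-bijection}, and any poset isomorphism carries saturated chains to saturated chains. Hence the corollary is equivalent to the statement that in $(S_x,\leq_{\rm Bruhat})$ every pair of saturated chains between the same two endpoints has the same length.

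For this I would invoke the classical fact that the Bruhat order on $S_x$ is graded, with rank function the inversion length
$$\ell(\sigma)=\#\{(i,j):i<j,\;\sigma(i)>\sigma(j)\};$$
equivalently, $\sigma\lessdot\tau$ is a covering relation if and only if $\tau=\sigma\cdot(i,j)$ for some transposition $(i,j)$ with $\ell(\tau)=\ell(\sigma)+1$. This is a direct consequence of the Lifting (or Exchange) Property of Bruhat order and is part of the standard material on Bruhat order in \cite{bona}, already cited in the proof of Theorem~\ref{thm-bruhat}. Consequently every saturated chain from $\sigma$ to $\tau$ in $(S_x,\leq_{\rm Bruhat})$ has length exactly $\ell(\tau)-\ell(\sigma)$.

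Transporting this back through the isomorphism yields the corollary. Since by Theorem~\ref{thm-lattice} the poset $\mathcal{D}_\Gamma$ has a unique minimum (the intersection-free diagram) and a unique maximum (the diagram with maximally many crossings), every maximal saturated chain in $\mathcal{D}_\Gamma$ has the common length $\binom{x}{2}$, corresponding to the inversion number of the longest permutation $w_0\in S_x$. There is no real obstacle: the genuine work was done in Theorem~\ref{thm-bruhat}, and the remaining input is the standard gradedness of Bruhat order, which requires no new argument.
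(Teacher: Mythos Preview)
Your proposal is correct and follows essentially the same approach as the paper: both invoke Theorem~\ref{thm-bruhat} to identify $(\mathcal{D}_\Gamma,\leq_{\rm arc})$ with the Bruhat order on $S_x$, and then cite the standard fact (from \cite{bona}) that the Bruhat order is graded by inversion number. Your additional remarks on the rank function and the length $\binom{x}{2}$ of a maximal chain are correct elaborations but not needed for the bare statement.
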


\begin{proof}
 By \cite[Proposition 7.18]{bona}, in the Bruhat order on $S_x$ all saturated chains have the same length.
 Therefore, by Theorem \ref{thm-bruhat}, we are done.
\end{proof}

\subsection{Saturated chains}

Let $(\alpha,\beta,\gamma)$ be a~triple of partitions and let $\Gamma$ be an~LR-tableau 
of type $(\alpha,\beta,\gamma)$. 
Let $\Delta,\Delta'$ be elements of the poset $\mathcal{D}_\Gamma$ or of the poset 
$\mathcal{D}_{\alpha,\gamma}^\beta$. We write
\begin{itemize}
 \item $\Delta<_{\rm arc} \Delta'$, if $\Delta\neq \Delta'$ and $\Delta\leq_{\rm arc}\Delta'$,
 \item $\Delta\to\Delta'$, if $\Delta<_{\rm arc}\Delta'$ and there is
no $\Delta''$ such that $\Delta<_{\rm arc}\Delta''<_{\rm arc}\Delta'$. 
\end{itemize}

\begin{lem}
 Let $\Gamma$ be an~LR-tableau such that
in any row of $\Gamma$ there is at most one non-empty box.
 Let $\Delta,\Delta'\in\mathcal{D}_\Gamma$. Then $\Delta'\to\Delta$
 if and only if $\Delta'$ is obtained from $\Delta$ by a~single move of type
 (A) or (B) that reduces the number of crossings by one in the corresponding
 arc diagrams.
\label{lem-byone-1-non-empty}\end{lem}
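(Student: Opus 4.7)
My plan is to prove the two implications separately. The forward (``if'') implication is a quick dimension argument, while the backward (``only if'') implication has three steps, the last of which is the technical heart.

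For the ``if'' direction, suppose $\Delta'$ is obtained from $\Delta$ by a single move of type (A) or (B) with $x(\Delta)-x(\Delta')=1$. Then $\Delta'<_{\rm arc}\Delta$ by definition of the arc order. From the dimension formula $\dim\,\mathbb V_\Delta = \dim\,\mathbb V_{\alpha,\gamma}^\beta - x(\Delta)$ of Section~\ref{section-stratification}, together with Theorem~\ref{thm-first-main}, any strict intermediate $\Delta''\in\mathcal D_\Gamma$ would yield strict integer inequalities $x(\Delta')<x(\Delta'')<x(\Delta)$, impossible when the outer gap equals one.

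For the ``only if'' direction, I would proceed in three steps. First, I observe that moves (A) and (B) preserve the multiset of sources of arcs, hence the LR-tableau $\Gamma$, whereas moves (C) and (D) change it; consequently any relation $\Delta'\leq_{\rm arc}\Delta$ with both endpoints in $\mathcal D_\Gamma$ can be realized by a sequence of (A)- and (B)-moves staying inside $\mathcal D_\Gamma$. Second, taking the first move of such a decomposition produces a $\Delta_1\in\mathcal D_\Gamma$ with $\Delta'\leq_{\rm arc}\Delta_1<_{\rm arc}\Delta$; the covering hypothesis $\Delta'\to\Delta$ then forces $\Delta_1=\Delta'$, so a single (A)- or (B)-move carries $\Delta$ to $\Delta'$.

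The third step, showing that this single move reduces the crossing count by exactly one, is the main obstacle. For move (A), swapping a crossing pair $(a,c),(b,d)$ with $a<b<c<d$ to the nested pair $(a,d),(b,c)$, I plan a case analysis over the positions of any third endpoint. The direct untangling contributes $-1$; a third arc $(e,f)$ contributes nothing except in the ``interlocked'' case $a<e<b<c<f<d$, when it contributes $-2$; third poles contribute nothing. Hence $x(\Delta)-x(\Delta')=1+2k$ with $k$ the number of interlocked arcs. The hypothesis on $\Gamma$ guarantees that all endpoints are distinct, so if $k\geq 1$ I pick an interlocked $(e,f)$ and instead apply move (A) first to the crossing pair $(a,c),(e,f)$: the resulting $\Delta_1$ still lies in $\mathcal D_\Gamma$ (sources are preserved), is strictly below $\Delta$, and an explicit two-step chain of (A)-moves transforms $\Delta_1$ into $\Delta'$. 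This produces a strict intermediate in $\mathcal D_\Gamma$ contradicting $\Delta'\to\Delta$, forcing $k=0$. The analogous analysis for move (B) (arc $(a,c)$ and pole at $b$ with $a<b<c$) introduces third-pole contributions of $-1$ per pole in $(b,c)$ alongside $-2$ per interlocked third arc, all dispatched by the same factoring trick.
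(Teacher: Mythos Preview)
Your argument is correct and follows essentially the same route as the paper's: both reduce the covering relation to a single (A)- or (B)-move, carry out a positional case analysis on a third arc or pole to isolate the ``interlocked'' configuration where the crossing count drops by more than one, and then factor that move through an explicit intermediate diagram in $\mathcal D_\Gamma$ to contradict the covering hypothesis. Your crossing formula $x(\Delta)-x(\Delta')=1+2k$ is just a tidier packaging of the paper's case list, and your ``if'' direction via the dimension formula is a cleaner version of what the paper calls obvious. One small slip: for move (B) taking arc $(a,c)$ and pole at $b$ to arc $(b,c)$ and pole at $a$, the extra-contributing third poles lie in $(a,b)$, not $(b,c)$; your factoring trick still applies unchanged.
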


\begin{proof}
 If $\Delta'$ is obtained from $\Delta$ by a~single move of type
 (A) or (B) that reduces the number of crossings by one in the corresponding
 arc diagrams, then obviously $\Delta'\to\Delta$.
 
 Assume that $\Delta'\to\Delta$. It is obvious that $\Delta'$ is obtained from $\Delta$ 
by a~single move of type (A) or (B) that changes $(m_i,n_i), (m_j,n_j)$
 into $(m_j,n_i), (m_i,n_j)$, where $n_j<n_i<m_j<m_i$. Let $\Delta$ and $\Delta'$ be the arc diagrams associated
 with $\Delta$ and $\Delta'$, respectively. Assume that this move reduces the number of crossings
 by more than one. It follows that in $\Delta$ there exists an~arc $(m_k,n_k)$ such that
 the arcs $(m_i,n_i)$, $(m_j,n_j)$ and $(m_k,n_k)$ in $\Delta$
have at least $2$ more crossings than 
  the arcs $(m_i,n_j)$, $(m_j,n_i)$ and $(m_k,n_k)$ in $\Delta'$. Assume that $(m_k,n_k)$ and $(m_i,n_i)$
 have a~crossing in $\Delta$. 
 We consider two cases: $n_k<n_i<m_k<m_i$ and $n_i<n_k<m_i<m_k$.
 
 If $n_k<n_i<m_k<m_i$, then we have the following possibilities:
 \begin{itemize}
  \item $n_k<n_j<n_i<m_k<m_j<m_i$, there are $3$ crossings in $\Delta$ and $2$ crossings in $\Delta'$;
  \item $n_k<n_j<n_i<m_j<m_k<m_i$, there are $2$ crossings in $\Delta$ and $1$ crossing in $\Delta'$;
  \item $n_j<n_k<n_i<m_k<m_j<m_i$, there are $2$ crossings in $\Delta$ and $1$ crossing in $\Delta'$;
  \item $n_j<n_k<n_i<m_j<m_k<m_i$. there are $3$ crossings in $\Delta$ and no crossing in $\Delta'$.
 \end{itemize}
Note that only in the last case the number of crossings is reduced by more than one. But in this
case we can obtain $\Delta'$ from $\Delta$ by the following sequence of moves: in $\Delta$ we have:
$(m_i,n_i), (m_k,n_k), (m_j,n_j)$; we resolve the crossing $(m_k,n_k), (m_j,n_j)$ and get
$(m_i,n_i), (m_j,n_k), (m_k,n_j)$; then we resolve the crossing $(m_i,n_i), (m_j,n_k)$ and get
$(m_j,n_i), (m_i,n_k), (m_k,n_j)$; finally resolving the last crossing $(m_i,n_k), (m_k,n_j)$
we get $(m_j,n_i), (m_k,n_k), (m_i,n_j)$ in $\Delta'$. It contradicts the assumption that  $\Delta'\to\Delta$.

In the remaining cases the proof is analogous.
\end{proof}

\begin{cor}
 Let $\Gamma$ be an~LR-tableau such that
in any row of $\Gamma$ there is at most one non-empty box.
In the poset $\mathcal{D}_\Gamma$ all saturated chains have the same length.
\label{cor-sat-LR}\end{cor}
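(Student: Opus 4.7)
The plan is to bootstrap directly from Lemma \ref{lem-byone-1-non-empty}. Under the present hypothesis (at most one non-empty box per row of $\Gamma$), that lemma characterizes the covering relations in $\mathcal{D}_\Gamma$: $\Delta' \to \Delta$ holds precisely when $\Delta'$ is obtained from $\Delta$ by a single move of type (A) or (B) reducing the number of crossings by exactly one. In particular, for every cover we have $x(\Delta) - x(\Delta') = 1$. Thus the crossing-count function $x \colon \mathcal{D}_\Gamma \to \mathbb{Z}_{\geq 0}$ is a rank function on the poset.

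From this the conclusion follows by a standard telescoping argument. Given a saturated chain
$$\Delta_0 \,<_{\rm arc}\, \Delta_1 \,<_{\rm arc}\, \cdots \,<_{\rm arc}\, \Delta_n$$
in $\mathcal{D}_\Gamma$, each consecutive pair $\Delta_{i-1}<_{\rm arc}\Delta_i$ is a cover by definition of a saturated chain, so $x(\Delta_i) - x(\Delta_{i-1}) = 1$. Summing gives $n = x(\Delta_n) - x(\Delta_0)$, a quantity depending only on the endpoints. Hence any two saturated chains between the same endpoints share a common length. Interpreting the statement as concerning maximal saturated chains, Theorem \ref{thm-lattice} guarantees a unique minimum (the intersection-free diagram) and a unique maximum (the diagram of maximal crossing number) in $\mathcal{D}_\Gamma$, so every maximal saturated chain runs from minimum to maximum and has the common length $x_{\max}$, the number of crossings in the maximal arc diagram.

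There is no substantial obstacle to this argument: the combinatorial content has been packed into Lemma \ref{lem-byone-1-non-empty}, whose proof already handled the delicate case-by-case analysis of how a single (A)/(B) move might appear to alter the crossing count by more than one in the presence of a third arc. The present corollary is essentially a rephrasing of that lemma as a rank-function statement, paralleling the route taken in Corollary \ref{cor-saturated-chains} but bypassing the Bruhat-order identification since conditions 1 and 3 on $\Gamma$ are no longer available.
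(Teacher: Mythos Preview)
Your proof is correct and follows essentially the same route as the paper: both invoke Lemma~\ref{lem-byone-1-non-empty} to see that each cover changes the crossing number by exactly one, so the length of any saturated chain is determined by the crossing counts at its endpoints (and hence every maximal chain has length equal to the number of crossings in the arc-maximal diagram). Your write-up is simply more explicit about the telescoping and about the appeal to Theorem~\ref{thm-lattice} for the unique extrema.
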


\begin{proof}
From Lemma \ref{lem-byone-1-non-empty} it follows that any saturated chain has 
  length equal to the number of crossings in the unique arc-maximal diagram
  in $\mathcal{D}_\Gamma$. 
\end{proof}

\begin{lem}
 Let $(\alpha,\beta,\gamma)$ be a~triple of partitions
 such that $\beta'_i\leq \gamma'_i+1$ for all $i$, 
i.e. $\beta\setminus\gamma$ is a vertical strip;
equivalently, there is at most one entry in each row.

\smallskip
 Let $\Delta,\Delta'\in\mathcal{D}_{\alpha,\gamma}^\beta$. Then $\Delta'\to\Delta$
 if and only if $\Delta'$ is obtained from $\Delta$ by a~single move of type
 (A), (B), (C) or (D) that reduces the number of crossings by one in the corresponding
 arc diagrams.
\label{lem-byone-1-non-empty-ABCD}\end{lem}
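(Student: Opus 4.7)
The plan is to follow the template of Lemma~\ref{lem-byone-1-non-empty}, splitting into two implications and handling the new move types (C) and (D) by a case analysis parallel to the one for (A) and (B).

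For the backward implication, I use the elementary fact that each of the moves (A), (B), (C), (D) strictly decreases $x(\Delta)$. Hence, if $\Delta'$ is obtained from $\Delta$ by a single such move with $x(\Delta')=x(\Delta)-1$, any intermediate $\Delta''$ satisfying $\Delta'<_{\rm arc}\Delta''<_{\rm arc}\Delta$ would force $x(\Delta)-1<x(\Delta'')<x(\Delta)$, which is impossible. Thus no intermediate exists and $\Delta'\to\Delta$.

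For the forward implication, I first note that $\Delta'<_{\rm arc}\Delta$ provides a chain of single moves taking $\Delta$ to $\Delta'$; the cover hypothesis $\Delta'\to\Delta$ forces this chain to consist of a single move $M$, since any longer chain has its first intermediate strictly between $\Delta'$ and $\Delta$ in the arc order. If $M$ is of type (A) or (B), the vertical-strip hypothesis implies that every LR-tableau supporting a diagram in $\mathcal D_{\alpha,\gamma}^\beta$ has at most one entry per row, so Lemma~\ref{lem-byone-1-non-empty} applies verbatim to the LR-tableau of $\Delta$ and gives that $M$ reduces crossings by exactly one.

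It remains to treat the case where $M$ is of type (C) or (D), which I would do by contradiction. Suppose $M$ reduces $x$ by $d\geq 2$. Writing the endpoints of the pair modified by $M$ as $n_j<n_i<m_j<m_i$ (with $m_j=\infty$ in the (D)-case), the excess crossings must arise from interactions between this pair and a third arc or pole $(m_k,n_k)$ present in $\Delta$; by the vertical-strip hypothesis the endpoints of this third object are distinct from $n_j,n_i,m_j,m_i$. Enumerating the finitely many orderings of $m_k,n_k$ relative to these four points yields a list of subcases, and in each I would exhibit an explicit factorisation of $M$ as a composition of two (or more) moves of types (A)--(D); applying the first of these to $\Delta$ then produces an intermediate diagram $\Delta''$ with $\Delta'<_{\rm arc}\Delta''<_{\rm arc}\Delta$, contradicting the cover hypothesis.

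The main obstacle is this case analysis. Because (C) and (D) change the underlying LR-tableau, the decomposition must track which rows swap between being sources and targets, and in several subcases the intermediate factorisation mixes moves of different types from $M$ itself. The vertical-strip hypothesis is essential throughout: it ensures the endpoints of the third arc/pole are disjoint from those of the modified pair, and it rules out double poles whose resolution would require moves outside of (A)--(D), thereby keeping the number of subcases finite and manageable.
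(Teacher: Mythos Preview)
Your proposal is correct and takes essentially the same approach as the paper: the paper's entire proof of this lemma is the single sentence ``The proof is analogous to that of Lemma~\ref{lem-byone-1-non-empty},'' and what you have written is precisely a fleshed-out version of that analogy, extending the third-arc case analysis to moves of type (C) and (D). Your observation that the (A)/(B) case reduces directly to Lemma~\ref{lem-byone-1-non-empty} (since a cover in $\mathcal D_{\alpha,\gamma}^\beta$ is a fortiori a cover in the subposet $\mathcal D_\Gamma$) is a clean way to organise the argument.
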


\begin{proof}
 The proof is analogous to that of Lemma \ref{lem-byone-1-non-empty}.
\end{proof}

\begin{cor}
Let $(\alpha,\beta,\gamma)$ be a~triple of partitions
 such that $\beta'_i\leq \gamma'_i+1$ for all $i$.
 In the poset $\mathcal{D}_{\alpha,\gamma}^\beta$ all saturated chains have the same length.
\label{cor-sat-general}\end{cor}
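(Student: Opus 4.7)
The plan is to invoke Lemma \ref{lem-byone-1-non-empty-ABCD} directly, in complete analogy with how Corollary \ref{cor-sat-LR} is deduced from Lemma \ref{lem-byone-1-non-empty}. By that lemma, any cover relation $\Delta \to \Delta'$ in the poset $\mathcal{D}_{\alpha,\gamma}^\beta$ is realized by a single arc move of type (A), (B), (C), or (D), and such a move changes the number of crossings by exactly one. Consequently, the crossing count $x(\cdot)$ serves as a rank function on $\mathcal{D}_{\alpha,\gamma}^\beta$: whenever $\Delta \to \Delta'$ is a cover relation, one has $x(\Delta') = x(\Delta) + 1$.

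Given any saturated chain $\Delta_0 \to \Delta_1 \to \cdots \to \Delta_\ell$, each step therefore increases $x$ by exactly one, so the length satisfies $\ell = x(\Delta_\ell) - x(\Delta_0)$ and depends only on the endpoints. Since by Theorem \ref{thm-lattice} every minimal element of $\mathcal{D}_{\alpha,\gamma}^\beta$ is an intersection-free arc diagram (hence has $x = 0$) and the maximum $\Delta_{\max}$ is unique, every maximal saturated chain in $\mathcal{D}_{\alpha,\gamma}^\beta$ will have length exactly $x(\Delta_{\max})$.

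The substantive work lies entirely in Lemma \ref{lem-byone-1-non-empty-ABCD}; the corollary itself is pure bookkeeping. The main conceptual point worth flagging is that the hypothesis $\beta'_i \leq \gamma'_i + 1$, which rules out more than one arc-endpoint or pole at any given point, is essential: without it the example on page \pageref{figure2} already exhibits cover relations that drop two crossings at once, so the poset fails to be graded and saturated chains between the same pair of elements may have genuinely different lengths. Thus no further obstacle arises once Lemma \ref{lem-byone-1-non-empty-ABCD} is in hand, and the proof reduces to a single sentence citing it.
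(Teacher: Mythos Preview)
Your proof is correct and follows essentially the same approach as the paper: both deduce from Lemma~\ref{lem-byone-1-non-empty-ABCD} that every cover relation changes the crossing number by exactly one, so the length of any saturated chain equals the crossing number of the unique arc-maximal diagram. Your write-up is slightly more explicit in framing $x(\cdot)$ as a rank function and invoking Theorem~\ref{thm-lattice} for the endpoints, but the substance is identical.
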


\begin{proof}
From Lemmata \ref{lem-byone-1-non-empty} and \ref{lem-byone-1-non-empty-ABCD} 
it follows that any saturated chain has
  length equal to the number of crossings in the unique arc-maximal diagram
  in $\mathcal{D}_{\alpha,\gamma}^\beta$. 
\end{proof}

\begin{rem}
 Note that in the poset $\mathcal D_{2211,32211}^{433221}$ 
given in Example \ref{figure2} on page \pageref{figure2},
 there exist saturated chains of different length. Therefore the assumption
 $\beta'_i\leq \gamma'_i+1$ for all $i$, 
 in Corollary~\ref{cor-sat-general} is necessary.
 Moreover the diagrams $\Delta_{43}^0$, $\Delta_{43}^1$, $\Delta_{43}^{2a}$, 
$\Delta_{43}^{2b}$  and $\Delta_{43}^3$ form the poset  $\mathcal{D}_\Gamma$ for 
\raisebox{-4mm}{\phantom .} 
$\Gamma=         \begin{picture}(18,12)(0,3)
      \multiput(0,9)(3,0)5{\smbox}
      \put(15,9){\numbox{1}}
      \multiput(0,6)(3,0)3{\smbox}
      \put(12,6){\numbox{1}}
      \put(9,6){\numbox{1}}
      \put(0,3){\smbox}
      \put(3,3){\numbox{1}}
      \put(6,3){\numbox{2}}
      \put(0,0){\numbox{2}}
    \end{picture}=\Gamma_{43}$. 
 Note that in this poset there exist saturated chains having different length. 
 Therefore the assumption
 that in any row of $\Gamma$ there is at most one non-empty box, given in Corollary \ref{cor-sat-LR}, is necessary.
\end{rem}

 \section{An algorithmic approach}\label{section-bubble-sort}

We describe an~algorithm which uses moves of type (A) and (B) to
transform an arbitrary arc diagram to the dominant one. We identify arc diagrams with the corresponding
sequences of sources and targets.\smallskip

\subsection{The bubble sort}

 \begin{alg} {\bf The classical bubble sort.}
 
  {\bf Input:}  A~sequence of sources and targets: 
  $$(m_1,n_1),\ldots,(m_x,n_x).$$
  
  {\bf Output:} The sequence 
  $$(m_{\sigma(1)},n_1),\ldots,(m_{\sigma(x)},n_x),$$
  where $m_{\sigma(1)}\leq m_{\sigma(2)}\leq \ldots\leq m_{\sigma(x)}$ and
  $\sigma$ is a~permutation of the set $\{1,\ldots,x\}$.
 
\medskip
 {\bf Description of the algorithm:}
 
  \begin{enumerate}  
  \item $y:=x$
  \item repeat 
   \begin{enumerate}
    \item for all $i=1,\ldots,y-1$ do
      \begin{enumerate}
       \item[(i)] if $m_{i+1}<m_i$, then exchange sources $m_i$ with $m_{i+1}$, 
       i.e. instead of $(m_i,n_i),(m_{i+1},n_{i+1})$ we get $(m_{i+1},n_i),(m_i,n_{i+1})$ 
       (we do the move of type (A) or (B) in the corresponding arc diagram)
      \end{enumerate}
      \item $y:=y-1$
   \end{enumerate}
  until $y=1$.
  \end{enumerate}

 \end{alg}


\begin{lem}
  Let $\Delta$ be an~arc diagram with corresponding sequence of sources and targets
  $$(m_1,n_1),\ldots,(m_x,n_x)$$
  such that $m_i>n_j$, for all $i,j\in\{1,\ldots,x\}$. If $\Delta$ has no multiple
  poles, then every move in (i) in the algorithm reduces the number of crossings by one.
\label{lem-by-one}\end{lem}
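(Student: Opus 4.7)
The plan is to isolate the effect of a single swap at positions $(i,i+1)$, triggered by $m_{i+1}<m_i$, by separately analyzing the crossing between the two arcs being swapped and their crossings with each other arc $(m_k,n_k)$, $k\notin\{i,i+1\}$. The criterion from Lemma~\ref{lem-order} specializes, under the source-target separation $m_i>n_j$ for all $i,j$, to the following: two arcs $(m,n)$ and $(m',n')$ cross if and only if $(m-m')(n-n')>0$ strictly. I~will use this reformulation throughout.

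First I would note that the canonical order of the chain combined with the no multiple poles hypothesis---which rules out a shared target in the sequence---forces $n_i>n_{i+1}$ strictly whenever the swap condition $m_{i+1}<m_i$ is triggered, since two pairs with a common target are listed with sources in weakly increasing order. Combined with $m_{i+1}>n_i$ (hypothesis) and $m_i>m_{i+1}$, the Lemma~\ref{lem-order} criterion $n_{i+1}<n_i<m_{i+1}<m_i$ holds, so the arcs $(m_i,n_i)$ and $(m_{i+1},n_{i+1})$ cross before the swap. After the swap, the pair becomes $(m_{i+1},n_i)$ and $(m_i,n_{i+1})$; neither orientation of the crossing criterion is satisfied, since each would require $m_{i+1}>m_i$ or $n_{i+1}>n_i$. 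This contributes exactly $-1$ from the swapped pair itself.

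Next I would verify that the crossing count with every other arc is preserved. Because targets are strictly decreasing in the canonical order, any $k\notin\{i,i+1\}$ satisfies either $k<i$, with $n_k>n_i>n_{i+1}$, or $k>i+1$, with $n_k<n_{i+1}<n_i$; in both cases $\epsilon:=\mathrm{sign}(n_i-n_k)=\mathrm{sign}(n_{i+1}-n_k)\in\{+1,-1\}$ is well defined. Writing $u=m_i-m_k$ and $v=m_{i+1}-m_k$, the combined crossing count of arcs $i$ and $i+1$ with arc $k$ equals $[u\epsilon>0]+[v\epsilon>0]$ before the swap and $[v\epsilon>0]+[u\epsilon>0]$ after (the sources are exchanged while the targets are fixed). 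These agree term-by-term, so each third arc contributes zero to the change; combining the two contributions, the total change in the crossing number is exactly $-1$.

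The main obstacle is pinpointing where the hypothesis enters: ``no multiple poles'' is used precisely to guarantee the common-sign identity $\mathrm{sign}(n_i-n_k)=\mathrm{sign}(n_{i+1}-n_k)$ for every $k$. If a third arc shared a target with one of the two being swapped, $\epsilon$ could vanish on one term but not the other, and the before/after sums could then differ by $\pm 1$, allowing extra crossings to be resolved in a single move. This is exactly the failure mode excluded by the hypothesis, consistent with the vertical strip assumption $\beta'_i\leq\gamma'_i+1$ appearing in Corollary~\ref{cor-sat-general}.
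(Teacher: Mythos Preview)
Your argument is correct and considerably more streamlined than the paper's. The paper establishes the invariance of crossings with a third arc $(m_k,n_k)$ through an exhaustive case analysis, checking separately that each crossing of $(m_k,n_k)$ with $(m_i,n_i)$ or $(m_{i+1},n_{i+1})$ before the swap matches a crossing after, and conversely---eight sub-cases in all. You replace this with the single observation that, since $n_k$ lies strictly on the same side of both $n_i$ and $n_{i+1}$, the combined indicator $[\,(m_i-m_k)\epsilon>0\,]+[\,(m_{i+1}-m_k)\epsilon>0\,]$ is symmetric in $m_i,m_{i+1}$ and hence unchanged by the source swap. This packages the paper's casework into one line, and the reformulation of Lemma~\ref{lem-order} as $(m-m')(n-n')>0$ under the source--target separation hypothesis is exactly what makes it work.

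One point of care: ``no multiple poles'' in the paper means only that no two \emph{poles} $(\infty,n)$ share a target; it does not by itself rule out two ordinary arcs, or an arc and a pole, with a common target. Your argument actually uses the stronger hypothesis that the targets $n_1>n_2>\cdots>n_x$ are all distinct, so that $\epsilon$ is always a nonzero sign. The paper's own case analysis tacitly assumes the same thing (for instance, it asserts $n_k<n_{i+1}$ from $n_k<n_i$, and $n_i<n_k$ from $n_{i+1}<n_k$, without further comment), so this is a shared imprecision in the statement rather than a defect peculiar to your method; under the intended hypothesis that at most one arc or pole meets each point, both proofs go through cleanly.
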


\begin{proof}  If we have a~crossing of the arcs $(m_i,n_i)$ and $(m_{i+1},n_{i+1})$,
 then $m_i>m_{i+1}$. After the exchange in (i) we get a~new diagram $\Delta'$ 
 with arcs $(m_{i+1},n_i)$ and $(m_i,n_{i+1})$ that have no crossing. 
 
 If there is a~crossing of arcs $(m_i,n_i)$ and $(m_k,n_k)$ in $\Delta$, where $k\neq i+1$,
 then $n_k<n_i<m_k<m_i$ or $n_i<n_k<m_i<m_k$. Consider the case $n_k<n_i<m_k<m_i$. Note
 that $n_k<n_{i+1}<n_i<m_k<m_i$ and therefore $n_k<n_{i+1}<m_k<m_i$. It follows that there
 is a~crossing of $(m_k,n_k)$ and $(m_i,n_{i+1})$ in $\Delta'$. In the case $n_i<n_k<m_i<m_k$,
 we have $n_{i+1}<n_i<n_k<m_i<m_k$ and a~crossing of $(m_k,n_k)$ and $(m_i,n_{i+1})$ in $\Delta'$.
 
 If there is a~crossing of arcs $(m_{i+1},n_{i+1})$ and $(m_k,n_k)$ in $\Delta$, where $k\neq i$,
 then $n_k<n_{i+1}<m_k<m_{i+1}$ or $n_{i+1}<n_k<m_{i+1}<m_k$. Consider the case $n_k<n_{i+1}<m_k<m_{i+1}$. Note
 that $n_k<n_{i+1}<n_i<m_k<m_{i+1}$ and therefore $n_k<n_i<m_k<m_{i+1}$. It follows that there
 is a~crossing of $(m_k,n_k)$ and $(m_{i+1},n_i)$ in $\Delta'$. In the case $n_{i+1}<n_k<m_{i+1}<m_k$,
 we have $n_{i+1}<n_i<n_k<m_{i+1}<m_k$ and a~crossing of $(m_k,n_k)$ and $(m_{i+1},n_i)$ in $\Delta'$.
 
 If there is a~crossing of arcs $(m_i,n_{i+1})$ and $(m_k,n_k)$ in $\Delta'$, where $k\neq j$,
 then $n_k<n_{i+1}<m_k<m_i$ or $n_{i+1}<n_k<m_i<m_k$. Consider the case $n_k<n_{i+1}<m_k<m_i$. Note
 that $n_k<n_{i+1}<n_i<m_k<m_i$ and therefore $n_k<n_i<m_k<m_i$. It follows that there
 is a~crossing of $(m_k,n_k)$ and $(m_i,n_i)$ in $\Delta$. In the case $n_{i+1}<n_k<m_i<m_k$,
 we have $n_{i+1}<n_i<n_k<m_i<m_k$ and a~crossing of $(m_k,n_k)$ and $(m_i,n_i)$ in $\Delta$.
 
  If there is a~crossing of arcs $(m_{i+1},n_i)$ and $(m_k,n_k)$ in $\Delta'$, where $k\neq j$,
 then $n_k<n_i<m_k<m_{i+1}$ or $n_i<n_k<m_{i+1}<m_k$. Consider the case $n_k<n_i<m_k<m_{i+1}$. Note
 that $n_k<n_{i+1}<n_i<m_k<m_{i+1}$ and therefore $n_k<n_{i+1}<m_k<m_{i+1}$. It follows that there
 is a~crossing of $(m_k,n_k)$ and $(m_{i+1},n_{i+1})$ in $\Delta$. In the case $n_i<n_k<m_{i+1}<m_k$,
 we have $n_{i+1}<n_i<n_k<m_{i+1}<m_k$ and a~crossing of $(m_k,n_k)$ and $(m_{i+1},n_{i+1})$ in $\Delta$.
\end{proof}

 \begin{ex}
  Set $z=1$, $y=x=4$ and do the loop (a). We apply our algorithm to the sequence
  $$(\sinfty,4),(6,3),(7,2),(5,1). $$
  For $i=1$, we compare $(\sinfty,4)$ with $(6,3)$. We have to exchange sources, and we get:
  $$(6,4),(\sinfty,3),(7,2),(5,1). $$
  For $i=2$, we check $(\sinfty,3)$ and $(7,2)$. Since $\sinfty>7>3>2$ we exchange sources
  and get:
  $$(6,4),(7,3),(\sinfty,2),(5,1). $$
  For $i=3$, we compare $(\sinfty,2)$ with $(5,1)$ and (after suitable exchange) we get:
  $$(6,4),(7,3),(5,2),(\sinfty,1). $$
  Now we put $y=3$ and start the second run of the loop (a).
  For $i=1$, we compare $(6,4)$ and $(7,3)$. They are in the proper positions.
  For $i=2$, we compare $(7,3)$ and $(5,2)$. We have to exchange sources, and we get:
  $$(6,4),(5,3),(7,2),(\sinfty,1). $$
  We put $y=2$ and start the third run of the loop (a).
  If $i=1$, we check $(6,4)$ and $(5,3)$, we exchange sources and get:
  $$(5,4),(6,3),(7,2),(\sinfty,1). $$
  We got the arc-minimal diagram.
 \end{ex}

 \begin{alg} {\bf Extended bubble sort.}
 Let $\Delta$ be an~arc diagram with corresponding sequence of sources and targets
  $$(m_1,n_1),\ldots,(m_x,n_x)$$
  
  {\bf Input:}  A~sequence of sources and targets: 
  $$(m_1,n_1),\ldots,(m_x,n_x).$$
  
  {\bf Output:} The sequence 
  $$(m_{\sigma(1)},n_1),\ldots,(m_{\sigma(x)},n_x),$$
  where $m_{\sigma(1)}\leq m_{\sigma(2)}\leq \ldots\leq m_{\sigma(x)}$ and
  the corresponding arc diagram has no crossings.
  
  \pagebreak[1]
 {\bf Description of the algorithm:}

 \smallskip
  Repeat (1)-(8) until there is no crossing in $\Delta$:
  \begin{enumerate}
   \item fix $j$ such that $m_j$ is minimal in the set $\{m_1,\ldots,m_x\}$;
   \item consider the sequence $(m_k,n_k),\ldots,(m_x,n_x)$, where $k$ is such that
     $n_k$ is the maximal element in $\{n_1,\ldots,n_x\}$ that is less than $m_j$;
    \item apply to this sequence the bubble sort algorithm;
    \item note that we got the sequence $(m_k,n_k),\ldots,(m_x,n_x)$, where $m_k=m_j$;
       note also that the arc $(m_k,n_k)$ has no crossings;
    \item remove $(m_k,n_k)$ from the sequence $(m_1,n_1),\ldots,(m_x,n_x)$;
    \item set $m_{i-1}=m_i$ and $n_{i-1}=n_i$ for all $i=k+1,\ldots,x$;
    \item set $x=x-1$;
    \item come back to (1);
  \end{enumerate}

 \end{alg}

\begin{ex}
 Consider an~arc diagram with the following sequence of sources and targets:

$$
\setlength\unitlength{1mm}
\begin{picture}(76,32)(0,-5)
  \put(0,0){\phantom m}
  \put(76,30){\phantom m}
        \put(0,4){\line(1,0){76}}
        \multiput(3,3)(4,0){18}{$\bullet$}
        \put(4,1){\sb{18}}
        \put(8,1){\sb{17}}
        \put(12,1){\sb{16}}
        \put(16,1){\sb{15}}
        \put(20,1){\sb{14}}
        \put(24,1){\sb{13}}
        \put(28,1){\sb{12}}
        \put(32,1){\sb{11}}
        \put(36,1){\sb{10}}
        \put(40,1){\sb9}
        \put(44,1){\sb8}
        \put(48,1){\sb7}
        \put(52,1){\sb6}
        \put(56,1){\sb5}
        \put(60,1){\sb4}
        \put(64,1){\sb3}
        \put(68,1){\sb2}
        \put(72,1){\sb1}
        \put(14,4){\oval(4,4)[t]}
        \put(14,4){\oval(12,12)[t]}
        \put(24,4){\line(0,1){25}}
        \put(22,4){\oval(36,36)[t]}
        \put(40,4){\oval(8,8)[t]}
        \put(40,4){\oval(16,16)[t]}
        \put(52,4){\line(0,1){25}}
        \put(60,4){\line(0,1){25}}
        \put(60,4){\oval(8,8)[t]}
        \put(68,4){\line(0,1){25}}
        \put(50,4){\oval(44,44)[t]}
        \put(60,7.5){\sb\bigcirc}
        \put(68,24.8){\sb\bigcirc}
        \put(60,25.5){\sb\bigcirc}
        \put(38,-5){\makebox(0,0){$(16,15),(17,14),(\sinfty,13),(18,9),(10,8),(11,7),(\sinfty,6),(\sinfty,4),(5,3),(\sinfty,2),(12,1)$}} 
    \end{picture}
$$

 In the step (1) of the algorithm we have $j=9$ and $m_j=5$. Moreover $k=8$ and $n_k=4$.
 
 We apply the bubble sort to the four arcs and poles ending at 4, 3, 2, and 1:
 
 $$ (\sinfty,4), (5,3), (\sinfty,2), (12,1) $$
 
In three steps, the algorithm removes the three encircled intersections.
We get:
 
 $$ (5,4), (12,3), (\sinfty,2), (\sinfty,1). $$
 
 We have the following arc diagram:

 $$
\setlength\unitlength{1mm}\begin{picture}(76,32)(0,-5)
  \put(0,0){\phantom m}
  \put(76,30){\phantom m}
        \put(0,4){\line(1,0){76}}
        \multiput(3,3)(4,0){18}{$\bullet$}
        \put(4,1){\sb{18}}
        \put(8,1){\sb{17}}
        \put(12,1){\sb{16}}
        \put(16,1){\sb{15}}
        \put(20,1){\sb{14}}
        \put(24,1){\sb{13}}
        \put(28,1){\sb{12}}
        \put(32,1){\sb{11}}
        \put(36,1){\sb{10}}
        \put(40,1){\sb9}
        \put(44,1){\sb8}
        \put(48,1){\sb7}
        \put(52,1){\sb6}
        \put(56,1){\sb5}
        \put(60,1){\sb4}
        \put(64,1){\sb3}
        \put(68,1){\sb2}
        \put(72,1){\sb1}
        \put(14,4){\oval(4,4)[t]}
        \put(14,4){\oval(12,12)[t]}
        \put(24,4){\line(0,1){25}}
        \put(22,4){\oval(36,36)[t]}
        \put(40,4){\oval(8,8)[t]}
        \put(40,4){\oval(16,16)[t]}
        \put(52,4){\line(0,1){25}}
        \put(68,4){\line(0,1){25}}
        \put(58,5.2){\sb{\textstyle\times}}
        \put(58,4){\oval(4,4)[t]}
        \put(72,4){\line(0,1){25}}
        \put(46,4){\oval(36,36)[t]}
        \put(40,7.3){\sb\bigcirc}
        \put(52,21.4){\sb\bigcirc}
        \put(40,11.5){\sb\bigcirc}
        \put(34,21.4){\sb\bigcirc}
        \put(38,-5){\makebox(0,0){$(16,15),(17,14),(\sinfty,13),(18,9),(10,8),(11,7),(\sinfty,6),(5,4), (12,3), (\sinfty,2), (\sinfty,1)$}}
    \end{picture}
$$
 
 We remove the arc $(5,4)$ (labelled by an $\times$)
and apply (1)-(8) to the sequence:
 
 $$(16,15),(17,14),(\sinfty,13),(18,9),(10,8),(11,7),(\sinfty,6), (12,3), (\sinfty,2), (\sinfty,1). $$
 
 Now we have $j=5$, $m_j=10$, $k=4$ and $n_k=9$. We apply the bubble sort to
the arcs and poles ending at or on the right of 9:
 
$$ (18,9),(10,8),(11,7),(\sinfty,6), (12,3), (\sinfty,2), (\sinfty,1) $$
 
In four steps, the algorithm removes the four encircled intersections. We get 
 
 $$ (10,9),(11,8),(12,7),(18,6), (\sinfty,3), (\sinfty,2), (\sinfty,1). $$
 
 Our sequence has the form:
 
$$
\setlength\unitlength{1mm}\begin{picture}(76,32)(0,-5)
  \put(0,0){\phantom m}
  \put(76,30){\phantom m}
        \put(0,4){\line(1,0){76}}
        \multiput(3,3)(4,0){18}{$\bullet$}
        \put(4,1){\sb{18}}
        \put(8,1){\sb{17}}
        \put(12,1){\sb{16}}
        \put(16,1){\sb{15}}
        \put(20,1){\sb{14}}
        \put(24,1){\sb{13}}
        \put(28,1){\sb{12}}
        \put(32,1){\sb{11}}
        \put(36,1){\sb{10}}
        \put(40,1){\sb9}
        \put(44,1){\sb8}
        \put(48,1){\sb7}
        \put(52,1){\sb6}
        \put(56,1){\sb5}
        \put(60,1){\sb4}
        \put(64,1){\sb3}
        \put(68,1){\sb2}
        \put(72,1){\sb1}
        \put(14,4){\oval(4,4)[t]}
        \put(14,4){\oval(12,12)[t]}
        \put(24,4){\line(0,1){27}}
        \put(28,4){\oval(48,48)[t]}
        \put(38,4){\oval(4,4)[t]}
        \put(38,4){\oval(12,12)[t]}
        \put(64,4){\line(0,1){27}}
        \put(68,4){\line(0,1){27}}
        \put(58,4){\oval(4,4)[t]}
        \put(72,4){\line(0,1){27}}
        \put(38,4){\oval(20,20)[t]}
        \put(24,27.3){\sb\bigcirc}
        \put(58,5.2){\sb{\textstyle\times}}
        \put(38,5.2){\sb{\textstyle\times}}
        \put(38,9.2){\sb{\textstyle\times}}
        \put(38,13.2){\sb{\textstyle\times}}
        \put(38,-5){\makebox(0,0){$(16,15),(17,14),(\sinfty,13),(10,9),(11,8),(12,7),(18,6), (\sinfty,3), (\sinfty,2), (\sinfty,1)$}}
    \end{picture}
$$

 We can remove arcs $(10,9),(11,8),(12,7)$. So we apply (1)-(8) to
 
 $$(16,15),(17,14),(\sinfty,13),(18,6), (\sinfty,3), (\sinfty,2), (\sinfty,1). $$
 
 Now $j=1$, $m_j=16$, $k=1$, $n_k=15$ and we apply the bubble sort to the full
sequence:
 
 $$(16,15),(17,14),(\sinfty,13),(18,6), (\sinfty,3), (\sinfty,2), (\sinfty,1) $$
 
The algorithm removes the last intersection in one step. We get
 
 $$(16,15),(17,14),(18,13),(\sinfty,6), (\sinfty,3), (\sinfty,2), (\sinfty,1). $$
 
 Our arc diagram has no crossings. The algorithm terminates with the output:
$$
\setlength\unitlength{1mm}\begin{picture}(76,32)(0,-5)
  \put(0,0){\phantom m}
  \put(76,30){\phantom m}
        \put(0,4){\line(1,0){76}}
        \multiput(3,3)(4,0){18}{$\bullet$}
        \put(4,1){\sb{18}}
        \put(8,1){\sb{17}}
        \put(12,1){\sb{16}}
        \put(16,1){\sb{15}}
        \put(20,1){\sb{14}}
        \put(24,1){\sb{13}}
        \put(28,1){\sb{12}}
        \put(32,1){\sb{11}}
        \put(36,1){\sb{10}}
        \put(40,1){\sb9}
        \put(44,1){\sb8}
        \put(48,1){\sb7}
        \put(52,1){\sb6}
        \put(56,1){\sb5}
        \put(60,1){\sb4}
        \put(64,1){\sb3}
        \put(68,1){\sb2}
        \put(72,1){\sb1}
        \put(14,4){\oval(4,4)[t]}
        \put(14,4){\oval(12,12)[t]}
        \put(52,4){\line(0,1){25}}
        \put(14,4){\oval(20,20)[t]}
        \put(38,4){\oval(4,4)[t]}
        \put(38,4){\oval(12,12)[t]}
        \put(64,4){\line(0,1){25}}
        \put(68,4){\line(0,1){25}}
        \put(58,4){\oval(4,4)[t]}
        \put(72,4){\line(0,1){25}}
        \put(38,4){\oval(20,20)[t]}
        \put(38,-5){\makebox(0,0){$(16,15),(17,14),(18,13),(10,9),(11,8),(12,7),(\sinfty,6),(5,4), (\sinfty,3), (\sinfty,2), (\sinfty,1)$}}
    \end{picture}
$$
 \end{ex}
 
 \begin{lem}
  Let $\Delta$ be an~arc diagram with corresponding sequence of sources and targets
  $$(m_1,n_1),\ldots,(m_x,n_x)$$
  and let $\Delta'$ be the dominant arc diagram of the same LR-type.
  If $\Delta$ has no multiple
  poles, then there is a~sequence of moves that reduce $\Delta$ to $\Delta'$ such that
  after every move the number of crossings is decreasing by one.
\label{lem-by-one-general}\end{lem}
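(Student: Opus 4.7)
The plan is to apply the extended bubble sort algorithm to $\Delta$ and verify that each elementary swap it produces is a move of type (A) or (B) that decreases the crossing number by exactly one.

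At each stage the algorithm identifies the minimum source $m_j$ of the current working sequence and the index $k$ for which $n_k$ is the largest target strictly less than $m_j$. The subsequence $(m_k,n_k),\ldots,(m_x,n_x)$ then satisfies $m_i \geq m_j > n_k \geq n_l$ for all $i,l \in \{k,\ldots,x\}$, which is exactly the hypothesis ``all sources exceed all targets'' required by Lemma~\ref{lem-by-one}; the absence of multiple poles is inherited from $\Delta$. Consequently each bubble sort swap reduces by one the crossing count within the subsequence. To lift this to the full diagram, I observe that each outside arc $(m_l,n_l)$ with $l<k$ has target $n_l \geq m_j > n_i$ for every $i \geq k$, so its crossing with a subsequence arc $(m,n)$ is controlled by the condition $n_l < m < m_l$, which depends only on the multiset $\{m_k,\ldots,m_x\}$. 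Because bubble sort permutes this multiset without altering it, the count of subsequence-to-outside crossings is invariant under each swap, and crossings among the outside arcs are untouched. Hence each swap reduces the total crossing count by exactly one.

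When the bubble sort stage terminates, the source $m_j$ has been moved to position $k$ paired with $n_k$; being the minimum source paired with the largest admissible target, this arc has no remaining crossings. It is separated off and the algorithm iterates on the shorter sequence. Termination follows by induction on $x$. The final arc diagram is intersection-free and preserves the LR-type of $\Delta$ (the multisets of sources and targets are unchanged throughout), so by Theorem~\ref{thm-lattice} it coincides with the unique dominant $\Delta'$.

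The principal technical point is the invariance of the subsequence-to-outside crossings under each swap; this rests on the strict inequality $m_j > n_k$ enforced by the cutoff rule. A secondary subtlety is that the no-multiple-poles hypothesis on $\Delta$ must remain available at every invocation of Lemma~\ref{lem-by-one}; I expect this to be the delicate step, handled by verifying that within each stage the bubble sort applied to the chosen subsequence preserves the property, possibly after reordering the swaps so that no intermediate configuration introduces a pole coincident with an existing one.
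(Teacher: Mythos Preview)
Your proposal follows exactly the paper's approach—run the extended bubble sort and invoke Lemma~\ref{lem-by-one} at each stage—while supplying the key detail the paper's one-line proof omits: that a swap inside the chosen subsequence leaves the subsequence-to-outside crossing count invariant, since for an outside arc $(m_l,n_l)$ with $n_l\geq m_j$ the crossing condition $n_l<m<m_l$ depends only on the multiset of subsequence sources. Your hedged final paragraph about preserving the no-multiple-poles hypothesis through intermediate steps is an honest flag of a subtlety the paper itself does not address; in the paper's intended applications (at most one arc or pole per point) the targets are distinct and the issue does not arise.
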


\begin{proof}
 It follows from the algorithms and Lemma \ref{lem-by-one}.
\end{proof}

\section{Three excursions}\label{section-excursions}

\subsection{Some projective varieties}

We show that projective spaces and Grassmann varieties occur as 
quotients of diagram varieties.

\medskip

Denote by $\mathbb D_{\alpha,\gamma}^\beta(k)$ the subset of the Grassmann variety
$\mathbb{G}(|\alpha|,k^{|\beta|})$ consisting of all submodules $U\subseteq N_{\beta}(k)$
 such that $U\cong N_{\alpha}(k)$ and $N_{\beta}(k)/U\cong N_{\gamma}(k)$.
 By $\mathbb V_{\alpha,\gamma}^\beta(k)$ denote the subset of the affine
 variety $\mathbb H_\alpha^\beta(k)=\Hom_k(N_\alpha(k),N_\beta(k))$
 consisting of all monomorphisms $f:N_\alpha(k)\to N_\beta(k)$
 with $\Coker\, f\cong N(\gamma)$.
 The group $\Aut (N_{\alpha}(k))^{op}$ acts freely on $\mathbb V_{\alpha,\gamma}^\beta(k)$
 in the following way. For $\sigma \in \Aut (N_{\alpha}(k))^{op}$ and $f\in \mathbb V_{\alpha,\gamma}^\beta(k)$
 we set $$ \sigma\cdot f=f\circ \sigma.$$
 The map $$ F:\mathbb V_{\alpha,\gamma}^\beta(k)\to \mathbb D_{\alpha,\gamma}^\beta(k) $$
 defined by $F(f)=(\Im\, f\subseteq N(\beta))$ is polynomial and its fibers are isomorphic to
 $\Aut (N_{\alpha}(k))^{op}$.
 \begin{rem} Let $(1^m)$ denote the partition $(1,\ldots,1)$ with $m$ parts.
  \begin{enumerate}
  \item Projective spaces are arc diagram varieties as
    $$\mathbb P(k^m)=\mathbb D_{(1),(1^{m-1})}^{(1^m)}(k) \quad\text{for}\quad m\in\mathbb N.$$
    Note that $\mathbb V_{(1),(1^{m-1})}^{(1^m)}(k)=k^m\setminus\{0\}$.
  \item Grassmann varieties can be realized as
    $$\mathbb G(\ell, k^m)=\mathbb D_{(1^\ell),(1^{m-\ell})}^{(1^m)}(k) \quad\text{for}\quad \ell,m\in\mathbb N, \ell\leq m.$$
    The variety $\mathbb V_{(1^\ell),(1^{m-\ell})}^{(1^m)}(k)$ consists of all $l\times m$ matrices with
    maximal rank.
  \end{enumerate}
\end{rem}

For finite fields, the size of the projective varieties is under control:

\medskip
We have  

$$| \mathbb D_{\alpha,\gamma}^\beta(\mathbb F_p)| = g_{\alpha,\gamma}^\beta(p)$$
and 
$$| \mathbb V_{\alpha,\gamma}^\beta(\mathbb F_p)| = |\Aut N_\alpha(\mathbb F_p)| \cdot | \mathbb D_{\alpha,\gamma}^\beta(\mathbb F_p)|=|\Aut N_\alpha(\mathbb F_p)| \cdot g_{\alpha,\gamma}^\beta(p).$$

\subsection{Degenerations of nilpotent operators}

Classical Hall polynomials allow to investigate geometric
properties of nilpotent operators.

Let $k$ be an~arbitrary algebraically closed field. 
We consider the affine variety $\mathbb{M}_n(k)$
consisting of all $n\times n-$matrices with coefficients in $k$. 
On $\mathbb{M}_n(k)$ we consider the Zariski topology and on all subsets of
$\mathbb{M}_n(k)$ we work with the induced topology. By $\mathbb{M}_n^0(k)$ denote
the closed subset of $\mathbb{M}_n(k)$ consisting of nilpotent matrices. 
The general linear group $Gl_n(k)$ acts on $\mathbb{M}_n^0(k)$
via conjugation:  $g\cdot A=gAg^{-1}$. The orbits of this action correspond bijectively
to isomorphism classes of objects in $\mathcal{N}(k,n)$, where  $\mathcal{N}(k,n)$
is the full subcategory of $\mathcal{N}(k)$ consisting of all objects $N_\alpha=N_\alpha(k)$
such that $\dim_kN_\alpha=n$. Denote by $G_\alpha=G_\alpha(k)$ the orbit of
$N_\alpha$ in $\mathbb{M}_n^0(k)$.

\begin{defin}
Let $N_\alpha$ and $N_\beta$ be objects 
in $\mathcal{N}(k,n)$.
The relation $N_\alpha \leq_{\rm deg} N_\beta$ 
holds if $G_\beta(k) \subseteq \overline{G_\alpha(k)}$ in $\mathbb{M}_n^0(k)$,
where $\overline{G(k)}$ is the closure of $G(k)$. 
\end{defin}

The following theorem is well known (see \cite[I.3]{kraft}) 

\begin{thm} 
Let $N_\alpha$ and $N_\beta$ be objects 
in $\mathcal{N}(k,n)$.
The relation $N_\alpha\leq_{\rm deg} N_\beta$ holds
if and only if  $\sum_{i=1}^m\alpha'_i\leq \sum_{i=1}^m\beta'_i$
for all $m\in \mathbb{N}$, where $\alpha'$ denotes the conjugate partition of $\alpha$.
\end{thm}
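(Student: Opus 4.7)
The plan is to exploit the fact that the Jordan type of a nilpotent operator is determined by the kernel dimensions of its powers: for $N$ of Jordan type $\lambda$,
$$\dim\ker N^m \;=\; \sum_{i=1}^m \lambda'_i,$$
since a Jordan block of size $k$ contributes $\min(k,m)$ to the kernel of $N^m$, and $\sum_j \min(\lambda_j,m) = \sum_{i=1}^m \#\{j:\lambda_j\geq i\} = \sum_{i=1}^m \lambda'_i$.

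For the direction ($\Rightarrow$), I would invoke lower semicontinuity of rank in the Zariski topology: for each $m$ and $r$, the locus $\{A\in\mathbb M_n(k): \text{rank}(A^m)\leq r\}$ is Zariski-closed, being cut out by $(r{+}1)\times(r{+}1)$ minors of $A^m$. Equivalently, $\{A:\dim\ker A^m\geq s\}$ is closed, and for $s=\dim\ker N_\alpha^m$ it contains $G_\alpha(k)$, hence its closure $\overline{G_\alpha(k)}$. If $G_\beta(k)\subseteq \overline{G_\alpha(k)}$, then $N_\beta$ satisfies $\dim\ker N_\beta^m\geq \dim\ker N_\alpha^m$, i.e.\ $\sum_{i=1}^m\beta'_i\geq \sum_{i=1}^m\alpha'_i$, for all $m$.

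For ($\Leftarrow$), I would use that the closure relation is transitive, so it suffices to handle covers of the dominance order on partitions (our inequalities are equivalent to $\alpha\geq_{\rm dom}\beta$ by the standard duality of dominance under conjugation). The covers $\alpha\gtrdot\beta$ are adjacent-row box moves: $\beta_j=\alpha_j-1$, $\beta_{j+1}=\alpha_{j+1}+1$, and $\beta_i=\alpha_i$ otherwise, where $\alpha_j\geq\alpha_{j+1}+2$. For each such cover I would write down an explicit one-parameter family. Decomposing $N_\alpha = \bigoplus_i J_{\alpha_i}$ and leaving the blocks with index other than $j$ and $j+1$ untouched, write the affected summand $J_a\oplus J_b$ on a basis $e_1,\dots,e_a,f_1,\dots,f_b$ (with $a=\alpha_j$, $b=\alpha_{j+1}$) and define
$$N_t(e_i)=e_{i+1}\text{ for }i<a{-}1,\; N_t(e_{a-1})=te_a,\; N_t(e_a)=0,\; N_t(f_l)=f_{l+1}\text{ for }l<b,\; N_t(f_b)=e_a.$$
At $t=0$ the Jordan blocks are $\{e_1,\dots,e_{a-1}\}$ of size $a-1$ and $\{f_1,\dots,f_b,e_a\}$ of size $b+1$, giving Jordan type $(a-1,b+1)$ on this summand. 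For $t\neq 0$ a direct calculation yields $\ker N_t=\langle e_a,\, e_{a-1}-tf_b\rangle$, so $N_t$ has exactly two Jordan blocks on the summand; since the orbit of $e_1$ has length $a$, the sizes must be $(a,b)$. The family $t\mapsto N_t$ thus realizes $G_\beta\subseteq \overline{G_\alpha}$ for this covering step.

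The main obstacle is the combinatorial reduction in ($\Leftarrow$): verifying that $\alpha\geq_{\rm dom}\beta$ can always be realized by a chain of adjacent-row box moves remaining within the dominance order. This is a standard fact in the combinatorics of partitions but needs a brief induction argument. Once the chain is in place, the semicontinuity proved in ($\Rightarrow$) identifies the Jordan type of $N_t$ from the kernel dimensions of its powers, so verifying the deformation reduces at each step to the elementary rank computation displayed above.
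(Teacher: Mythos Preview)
The paper does not prove this theorem; it is quoted as well known with a reference to Kraft, so there is nothing on the paper's side to compare your argument against.

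Your direction $(\Rightarrow)$ via lower semicontinuity of rank is correct and is the standard proof.

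The direction $(\Leftarrow)$ contains a genuine error in the combinatorial reduction. You assert that the covers $\alpha\gtrdot\beta$ of the dominance order are the adjacent-row box moves with $\alpha_j\ge\alpha_{j+1}+2$, and that any dominance relation factors through a chain of such moves. This is false: for $\alpha=(2,1)$ and $\beta=(1,1,1)$ we have $\alpha\gtrdot\beta$, yet no adjacent-row move from $(2,1)$ produces a partition at all. Covers in dominance order also include moves between non-adjacent rows (Brylawski's characterization), so the ``standard fact'' you invoke at the end does not hold as stated.

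The repair costs almost nothing. Your one-parameter family on $J_a\oplus J_b$ nowhere uses that the two rows are adjacent; it works verbatim for any pair of rows $i<j$ with $a=\alpha_i$, $b=\alpha_j$. Moreover, one checks that \emph{every} single-box move from row $i$ to row $j$ that results in a partition forces $\alpha_i\ge\alpha_j+2$ (if $j=i+1$ this is immediate; if $j>i+1$ then $\alpha_i>\alpha_{i+1}\ge\alpha_{j-1}>\alpha_j$), so your limit really has type $(a-1,b+1)$ on that summand. The correct combinatorial input is then that dominance order is generated by single-box moves between \emph{some} pair of rows---precisely the ``box order'' whose equivalence with $\leq_{\rm deg}$ the paper proves in the theorem immediately following this one. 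With the adjacency requirement dropped, your argument goes through.
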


Let $\alpha$ and $\beta$ be partitions of $n$. We write $\alpha\mapsto_{\rm box}
\beta$ if there exists $i<j$ such that $\alpha_i=\beta_i+1$, $\alpha_j=\beta_j-1$
and $\alpha_k=\beta_k$ for $k\neq i,j$. We define the {\bf box order}  $\alpha\leq_{\rm box}
\beta$ to be the partial order generated by all moves $\mapsto_{\rm box}$.

If we look at  Young diagrams, the box order is generated 
by a~sequence of moves of type (going up with a~box):

\vspace{-4mm}
$$\begin{picture}(18,12)(0,6)
\multiput(0,12)(3,0)5{\smbox}
\put(15,12){\smbox}
\multiput(0,9)(3,0)4{\smbox}
\put(12,9){\smbox}
\multiput(0,6)(3,0)2{\smbox}
\multiput(6,6)(3,0)2{\smbox}
\put(0,3){\smbox}
\put(3,3){\numbox{x}}
\put(0,0){\smbox}
\end{picture}
\qquad\qquad \leq_{\rm box} \qquad\qquad
\begin{picture}(18,12)(0,6)
\multiput(0,12)(3,0)5{\smbox}
\put(15,12){\smbox}
\multiput(0,9)(3,0)4{\smbox}
\put(12,9){\smbox}
\multiput(0,6)(3,0)2{\smbox}
\multiput(6,6)(3,0)2{\smbox}
\put(0,3){\smbox}
\put(15,9){\numbox{x}}
\put(0,0){\smbox}
\end{picture} $$
\vspace{5mm}

\begin{thm}
 Let $\alpha$ and $\beta$ be partitions of $n$. Then
  $$ N_\alpha\leq_{\rm deg} N_\beta \;\;\; \mbox{ if and only if }\;\;\; \alpha\leq_{\rm box}\beta. $$
\end{thm}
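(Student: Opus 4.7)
The plan is to combine the immediately preceding theorem with the classical equivalence between box order and dominance order on partitions. That theorem gives $N_\alpha \degleq N_\beta$ iff $\sum_{i=1}^m \alpha'_i \leq \sum_{i=1}^m \beta'_i$ for all $m$, which in turn is equivalent (by the standard fact that conjugation reverses dominance) to
$$ \sum_{k=1}^m \beta_k \;\leq\; \sum_{k=1}^m \alpha_k \qquad\text{for all } m\in\mathbb{N}. $$
So it suffices to prove that this inequality is equivalent to $\alpha \boxleq \beta$.

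For the direction ``$\Rightarrow$'', by transitivity of $\boxleq$ it suffices to verify the inequality for a single move $\alpha \mapsto_{\rm box} \beta$ with $i<j$, $\alpha_i = \beta_i+1$ and $\alpha_j = \beta_j-1$. A direct computation shows that $\sum_{k=1}^m\alpha_k - \sum_{k=1}^m\beta_k$ equals $0$ for $m<i$, equals $1$ for $i\leq m<j$, and equals $0$ for $m\geq j$. In particular all partial-sum inequalities are satisfied, as required.

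For the converse ``$\Leftarrow$'', I would induct on the defect $d(\alpha,\beta)=\sum_m\bigl(\sum_{k=1}^m\alpha_k - \sum_{k=1}^m\beta_k\bigr)\in\mathbb{Z}_{\geq 0}$, which vanishes exactly when $\alpha=\beta$. Given $\alpha\neq\beta$, I would exhibit a box move $\alpha\mapsto_{\rm box}\gamma$ with $\gamma$ still dominating $\beta$ (so the inductive hypothesis applies) and $d(\gamma,\beta)<d(\alpha,\beta)$. The construction: let $i_0$ be the smallest index where $\alpha_{i_0}\neq\beta_{i_0}$, so necessarily $\alpha_{i_0}>\beta_{i_0}$; let $i$ be the largest index with $\alpha_i = \alpha_{i_0}$, so row $i$ admits a removable corner and $\alpha_i>\beta_i$; let $j>i$ be the smallest index with $\alpha_j<\beta_j$, so $\alpha_{j-1}\geq\beta_{j-1}\geq\beta_j>\alpha_j$, meaning row $j$ admits an added box. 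Setting $\gamma_i=\alpha_i-1$, $\gamma_j=\alpha_j+1$, and $\gamma_k=\alpha_k$ otherwise yields a partition with $\alpha\mapsto_{\rm box}\gamma$.

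The principal obstacle is checking that $\gamma$ still dominates $\beta$. Prefix sums of $\gamma$ agree with those of $\alpha$ outside $[i,j)$ and equal them minus $1$ on that interval. By the minimality of $j$ one has $\alpha_k\geq\beta_k$ for all $i_0\leq k<j$, and together with $\alpha_k=\beta_k$ for $k<i_0$ this gives $\sum_{k\leq m}\alpha_k-\sum_{k\leq m}\beta_k\geq 1$ for every $m\in[i_0,j-1]\supseteq[i,j-1]$. Hence $\sum_{k\leq m}\gamma_k\geq\sum_{k\leq m}\beta_k$ for all $m$, and $d$ drops by exactly $j-i>0$, so the induction closes. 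This is the standard Muirhead-type argument, and is the only nontrivial step.
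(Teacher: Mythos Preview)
Your proof is correct and follows essentially the same strategy as the paper's: reduce to the classical equivalence between dominance order and box order, and establish that equivalence by iteratively constructing intermediate partitions via single box moves. The only real difference is a choice of coordinates and of which endpoint to perturb. The paper works directly with the conjugate partitions and the inequality $\sum_{i\leq m}\alpha'_i\leq\sum_{i\leq m}\beta'_i$, building the intermediate $\gamma$ by modifying $\beta'$ at two positions $s<t$ and then saying ``continuing this procedure''; you first invoke the duality of dominance under conjugation to pass to $\sum_{k\leq m}\beta_k\leq\sum_{k\leq m}\alpha_k$, and then modify $\alpha$ at rows $i<j$. Your write-up is in fact a bit more careful: you run an explicit induction on the defect $d(\alpha,\beta)$, and you verify both that the intermediate $\gamma$ is a genuine partition and that it still dominates $\beta$, checks that the paper leaves implicit. (One cosmetic point: your labels ``$\Rightarrow$'' and ``$\Leftarrow$'' are swapped relative to the displayed equivalence you set up, though the mathematics is unaffected.)
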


{\bf Proof.} Let $N_\alpha\leq_{\rm deg} N_\beta$ and $\alpha\neq \beta$. It follows that  
$\sum_{i=1}^m\alpha'_i\leq \sum_{i=1}^m\beta'_i$
for all $m\in \mathbb{N}$. Let $s$ be the minimal natural number such that  
$\sum_{i=1}^s\alpha'_i< \sum_{i=1}^s\beta'_i$. It follows that
$\alpha'_i=\beta'_i$ for all $i=1,\ldots, s-1$ and $\alpha'_s<\beta'_s$. Since 
$\sum_{i=1}^\infty\alpha'_i= \sum_{i=1}^\infty\beta'_i=n$,
there exists $t>s$ such that $\alpha'_t>\beta'_t$. Chose 
$t$ minimal with this property. Let $\gamma$ be the partition such that
$$\gamma'=(\beta'_1,\ldots,\beta'_{s-1},\beta'_s-1,\beta'_{s+1},
\ldots,\beta'_{t-1},\beta'_t+1,\beta'_{t+1},\ldots).$$ 
It is straightforward to check that  $\gamma\leq_{\rm box}\beta$ and
$$\sum_{i=1}^m\alpha'_i\leq \sum_{i=1}^m\gamma'_i \leq\sum_{i=1}^m\beta'_i,$$
for all $m$, and 
$$\sum_{i=1}^s\alpha'_i\leq \sum_{i=1}^s\gamma'_i <\sum_{i=1}^s\beta'_i.$$
Therefore we have $N_\alpha\leq_{\rm deg} N_\gamma$. 
Continuing this procedure we prove that $\alpha\leq_{\rm box}\beta$.

Conversely, assume that $\alpha\leq_{\rm box}\beta$ is given by single ``box move''.
It is easy to prove that $\sum_{i=1}^m\alpha'_i\leq \sum_{i=1}^m\beta'_i$
for all $m\in \mathbb{N}$. Therefore $N_\alpha\leq_{\rm deg} N_\beta$ and we are done. \epv

Combining results presented in \cite[I.3]{kraft} and in \cite{macd}
we can prove the following.

\begin{thm}
 Let
$$ 0\to N_\alpha\to N_\lambda\to N_\beta\to 0, $$
$$ 0\to N_\alpha\to N_\gamma\to N_\beta\to 0 $$
be short exact sequences of $k[T]$-modules.
If $N_\lambda\leq_{\rm deg}N_\gamma$, then
$$ \deg g_{\alpha\beta}^\lambda \leq \deg g_{\alpha\beta}^\gamma.$$
\end{thm}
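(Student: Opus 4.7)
The plan is to reduce the inequality of degrees to a monotonicity statement for the moment $n$, and then to verify that monotonicity by telescoping along a chain of elementary box moves supplied by the preceding theorem.

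First I would use the formula $\deg g_{\alpha,\beta}^\mu = n(\mu) - n(\alpha) - n(\beta)$ recorded in Section~\ref{section-stratification}. The existence of the two short exact sequences forces $|\lambda|=|\alpha|+|\beta|=|\gamma|$, and $n(\alpha),n(\beta)$ are fixed throughout; subtracting the two instances of the formula therefore reduces the assertion $\deg g_{\alpha\beta}^\lambda\leq\deg g_{\alpha\beta}^\gamma$ to the purely combinatorial inequality $n(\lambda)\leq n(\gamma)$.

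To establish this, I would invoke the preceding theorem to convert $N_\lambda\leq_{\rm deg}N_\gamma$ into $\lambda\leq_{\rm box}\gamma$. Since $\leq_{\rm box}$ is by definition the partial order generated by the elementary moves $\mapsto_{\rm box}$, this unfolds into a chain
$$\lambda=\mu^{(0)}\mapsto_{\rm box}\mu^{(1)}\mapsto_{\rm box}\cdots\mapsto_{\rm box}\mu^{(r)}=\gamma.$$
The crucial observation is that a single move strictly increases $n$: if $\mu\mapsto_{\rm box}\mu'$ with $\mu_i=\mu'_i+1$, $\mu_j=\mu'_j-1$ and $i<j$, while $\mu_k=\mu'_k$ for $k\neq i,j$, then only the rows $i$ and $j$ contribute to the difference, and one computes directly from $n(\mu)=\sum_k(k-1)\mu_k$ that
$$n(\mu')-n(\mu)\;=\;(i-1)(\mu'_i-\mu_i)+(j-1)(\mu'_j-\mu_j)\;=\;-(i-1)+(j-1)\;=\;j-i\;>\;0.$$
Telescoping over the chain yields $n(\gamma)-n(\lambda)=\sum_{s=0}^{r-1}\bigl(n(\mu^{(s+1)})-n(\mu^{(s)})\bigr)\geq 0$, which is precisely the required inequality.

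I do not expect any substantive obstacle: once the preceding theorem is applied, the argument reduces to a one-line arithmetic check per elementary move. The only point that warrants care is the orientation convention, namely that $\mu\mapsto_{\rm box}\mu'$ witnesses $\mu\leq_{\rm box}\mu'$ (so that the chain genuinely runs from $\lambda$ upward to $\gamma$); this is immediate from the paper's definition of $\leq_{\rm box}$ as the partial order generated by the moves $\mapsto_{\rm box}$.
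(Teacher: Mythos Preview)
Your proof is correct. Both you and the paper reduce the claim, via the degree formula $\deg g_{\alpha\beta}^\mu=n(\mu)-n(\alpha)-n(\beta)$, to the inequality $n(\lambda)\leq n(\gamma)$; the difference lies in how that inequality is obtained. You invoke the immediately preceding theorem to replace $N_\lambda\leq_{\rm deg}N_\gamma$ by $\lambda\leq_{\rm box}\gamma$, unfold a chain of elementary box moves, and check by a one-line computation that each move increases $n$ by $j-i>0$. The paper instead bypasses the box order: it uses the dominance characterization of $\leq_{\rm deg}$ directly (via \cite{kraft} and the conjugation duality in \cite{macd}) to obtain $\sum_{i\leq m}\lambda_i\geq\sum_{i\leq m}\gamma_i$ for all $m$, and then derives $n(\lambda)\leq n(\gamma)$ by an Abel-summation style trick---multiplying the equality $|\lambda|=|\gamma|$ by $k$ and subtracting the partial-sum inequalities for $m=1,\ldots,k$. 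Your route is shorter and exploits the box-order theorem just established; the paper's route is more self-contained in that it needs only the dominance description of degeneration, not its equivalence with $\leq_{\rm box}$.
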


\begin{proof}
Assume that  $N_\lambda\leq_{\rm deg}N_\gamma$. By \cite[I.3]{kraft}, we obtain for any $m\geq 1$:
$$ \sum_{i=1}^m\lambda'_i \leq \sum_{i=1}^m\gamma'_i.$$
It follows from \cite[Section I, 1.11]{macd} that the following inequality holds 
for any $m\geq 1$:
$$ \sum_{i=1}^m\lambda_i \geq \sum_{i=1}^m\gamma_i.$$
Let $\lambda=(\lambda_1,\ldots,\lambda_k)$ and $\gamma=(\gamma_1,\ldots,\gamma_n)$.
Since $\lambda_1+\ldots+\lambda_k=\gamma_1+\ldots+\gamma_n$ and $ \sum_{i=1}^m\lambda_i \geq \sum_{i=1}^m\gamma_i$,
we have $n\geq k$. We prove that $n(\lambda)\leq n(\gamma)$. Consider the equality
$k\cdot\lambda_1+\ldots+k\cdot\lambda_k=k\cdot\gamma_1+\ldots+k\cdot\gamma_n$ and subtract
inequalities $ \sum_{i=1}^m\lambda_i \geq \sum_{i=1}^m\gamma_i$, for $m=1,\ldots,k$.
We get 
$$\begin{array}{rcl}
n(\lambda)&=&0\cdot\lambda_1+1\cdot\lambda_2+\ldots+(k-1)\cdot\lambda_k \\
&\leq&
0\cdot\gamma_1+1\cdot\lambda_2+\ldots+(k-1)\cdot\gamma_k+k\cdot\gamma_{k+1}+\ldots+k\cdot\gamma_n\leq n(\gamma).   
  \end{array}
$$
This finishes the proof, because
  $$ \deg g_{\alpha\beta}^\lambda =n(\lambda)-n(\alpha)-n(\beta)\leq n(\gamma)-n(\alpha)-n(\beta)=\deg g_{\alpha\beta}^\gamma.$$
\end{proof}

Connections of Young tableaux and partitions 
with degenerations and generic extensions of nilpotent operators
are also studied in \cite{kos2012}.

\subsection{A partial ordering on LR-tableaux}

Let $\Gamma$ be an LR-tableau of type $(\alpha,\beta,\gamma)$. 
Note that the poset structure on $\mathcal D_\Gamma$
is just the restriction of $\mathcal D_{\alpha,\gamma}^\beta$ 
to the arc diagrams in $\Gamma$.  
By ``identifying'' those arc diagrams we obtain a poset structure
$\barD$ on the set of LR-tableaux
of type $(\alpha,\beta,\gamma)$; the relation is given by
$$\Gamma\leq \Gamma' \quad\Longleftrightarrow\quad
  \mathbb V_{\Gamma'}\cap \overline{\mathbb V}_\Gamma \neq 0.$$
We can characterize this order relation in different ways.

\smallskip
\setlength\unitlength{1mm}
Assume that $\alpha_1\leq 2$ holds, then an LR-tableau $\Gamma$
of type $(\alpha,\beta,\gamma)$ is given by three partitions
$\gamma\subset \tilde\gamma \subset \beta$ where 
the {\bf intermediate partition} $\tilde \gamma$ is such
that $\beta\setminus\tilde\gamma$ consists of all boxes $\singlebox 1$
and $\tilde\gamma\setminus\gamma$ consists of all boxes $\singlebox 2$. 

\begin{prop}\label{prop-dbar}
The following assertions are equivalent for LR-tableaux $\Gamma$, $\Gamma'$
of type $(\alpha,\beta,\gamma)$.
\begin{enumerate}
\item $\Gamma\leq \Gamma'$.
\item There exists $\Delta'\in\mathcal D_{\Gamma'}$ such that 
  $\mathbb V_{\Delta'}\subset\overline{\mathbb V}_\Gamma$.
\item There are $\Delta\in\mathcal D_\Gamma$ and $\Delta'\in\mathcal D_{\Gamma'}$
  such that $\Delta\arcleq\Delta'$.
\item The intermediate partitions $\tilde\gamma$ for $\Gamma$ and
  ${\tilde\gamma}'$ for $\Gamma'$ satisfy ${\tilde\gamma}\boxleq {\tilde\gamma}'$.
\end{enumerate}
\end{prop}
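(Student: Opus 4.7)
The plan is to prove the cycle $(1)\Rightarrow(2)\Rightarrow(3)\Rightarrow(1)$ by exploiting the stratification together with Theorem~\ref{thm-first-main}, and then to establish $(3)\Leftrightarrow(4)$ by tracking the effect of each elementary move (A)--(D) on the intermediate partition $\tilde\gamma$.

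For $(1)\Rightarrow(2)$: the set $\mathbb V_{\Gamma'}=\bigcup_{\Delta'\in\mathcal D_{\Gamma'}}\mathbb V_{\Delta'}$ is a finite disjoint union of $G$-orbits, and $\overline{\mathbb V}_\Gamma$ is closed and equals $\bigcup_{\Delta\in\mathcal D_\Gamma}\overline{\mathbb V}_\Delta$, hence itself a union of orbits; if $\mathbb V_{\Gamma'}\cap\overline{\mathbb V}_\Gamma\neq\emptyset$, some $\mathbb V_{\Delta'}$ meets $\overline{\mathbb V}_\Gamma$ and therefore lies entirely inside it. For $(2)\Rightarrow(3)$: writing $\overline{\mathbb V}_\Gamma=\bigcup_{\Delta\in\mathcal D_\Gamma}\overline{\mathbb V}_\Delta$ and applying the same orbit-closure reasoning gives $\Delta\in\mathcal D_\Gamma$ with $\mathbb V_{\Delta'}\subseteq\overline{\mathbb V}_\Delta$, i.e.\ $\Delta\leq_{\rm deg}\Delta'$; Theorem~\ref{thm-first-main} then converts this to $\Delta\arcleq\Delta'$. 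For $(3)\Rightarrow(1)$: Theorem~\ref{thm-first-main} turns $\Delta\arcleq\Delta'$ into $\mathbb V_{\Delta'}\subseteq\overline{\mathbb V}_\Delta\subseteq\overline{\mathbb V}_\Gamma$, so $\mathbb V_{\Gamma'}\cap\overline{\mathbb V}_\Gamma\supseteq\mathbb V_{\Delta'}\neq\emptyset$.

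For $(3)\Rightarrow(4)$, factor any chain of moves from $\Delta'$ down to $\Delta$ into single elementary steps. Moves (A) and (D) preserve the multiset of arc sources, and hence fix both the LR-tableau and $\tilde\gamma$. By contrast, moves (B) and (C) each relocate a single source from some row $m$ to a strictly smaller-indexed row $n_1$; in terms of the intermediate partition this is exactly one covering step $\tilde\gamma^{\rm new}\mapsto_{\rm box}\tilde\gamma^{\rm old}$, so reducing crossings corresponds to decreasing $\tilde\gamma$ in the box order. Composing all the single steps along the chain yields $\tilde\gamma\leq_{\rm box}\tilde\gamma'$.

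For $(4)\Rightarrow(3)$, decompose $\tilde\gamma\leq_{\rm box}\tilde\gamma'$ as a chain $\tilde\gamma=\tilde\gamma^{(0)}\mapsto_{\rm box}\cdots\mapsto_{\rm box}\tilde\gamma^{(n)}=\tilde\gamma'$ with corresponding LR-tableaux $\Gamma^{(0)},\ldots,\Gamma^{(n)}$. For each covering step $\tilde\gamma^{(k)}\mapsto_{\rm box}\tilde\gamma^{(k+1)}$ one builds Klein refinements of $\Gamma^{(k)}$ and $\Gamma^{(k+1)}$ in which the migrating $\singlebox 2$ is paired with a $\singlebox 1$ or a pole sitting in the neighbouring row, so that the change is implemented by a single move of type (B) or (C) on the corresponding arc diagrams; these diagrams are therefore $\arcleq$-comparable. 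To combine the $n$ local comparisons into a single pair $\Delta\in\mathcal D_\Gamma$, $\Delta'\in\mathcal D_{\Gamma'}$, invoke Theorem~\ref{thm-lattice} to pass through the unique arc-maximal element of each intermediate poset $\mathcal D_{\Gamma^{(k)}}$ and use transitivity of $\arcleq$ in the global poset $\mathcal D_{\alpha,\gamma}^\beta$.

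The hardest step will be $(4)\Rightarrow(3)$: one must both verify the \emph{local} realisation, that every $\mapsto_{\rm box}$ covering can actually be implemented by a single (B)- or (C)-move on some pair of arc diagrams (which forces a careful check that the required $\singlebox 1$-pairings or pole positions are made available by the LR-constraints), and the \emph{global} stitching of these local realisations into a single comparison in $\mathcal D_{\alpha,\gamma}^\beta$.
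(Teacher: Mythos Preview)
Your cycle $(1)\Leftrightarrow(2)\Leftrightarrow(3)$ is essentially the paper's argument and is fine. The problem lies in your treatment of $(3)\Leftrightarrow(4)$: you have misclassified the moves. Going back to the pictures defining (A)--(D), one sees that move (B) keeps the arc \emph{source} fixed (only the target of the arc and the position of the pole are interchanged), so (B) leaves the multiset of sources---and hence the LR-tableau and $\tilde\gamma$---unchanged. Conversely, move (D) replaces the long arc by a shorter one with a different source and turns the old source into a pole, so (D) \emph{does} change the LR-tableau. Thus the correct dichotomy is that (A) and (B) preserve the LR-type, while (C) and (D) change it by exchanging a $\singlebox1$ with a $\singlebox2$, which is exactly what the paper uses. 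With your labeling, the argument for $(3)\Rightarrow(4)$ does not go through as written.

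For $(4)\Rightarrow(3)$ your plan is also more involved than necessary and contains an unjustified step: there is no reason a covering relation $\tilde\gamma\mapsto_{\rm box}\tilde\gamma'$ should involve \emph{neighbouring} rows, nor do you explain why the required pairing of the migrating $\singlebox2$ with a suitable $\singlebox1$ or pole is actually available in some Klein refinement. The paper sidesteps both issues: for a single box move from row $a$ to row $b$, take $\Delta'$ to be the unique arc diagram of type $\Gamma'$ with the \emph{maximal} number of crossings (Theorem~\ref{thm-lattice}(2)); in this diagram there must be an arc with source $a$ crossing an arc or pole with target $b$, and resolving that single crossing by a (C)- or (D)-move already produces a diagram $\Delta$ of type $\Gamma$. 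Transitivity of $\arcleq$ in $\mathcal D_{\alpha,\gamma}^\beta$ then finishes the argument without any additional stitching through maximal elements.
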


\begin{proof}
The equivalence of 1.\ and 2.\ is clear from the definitions.
To see that 2.\ implies 3.\ note that there is an arc diagram $\Delta$
such that $\overline{\mathbb V}_\Gamma=\overline{\mathbb V}_\Delta$;
the converse holds by Theorem~\ref{thm-first-main} and 
since $\mathbb V_\Delta\subset \mathbb V_\Gamma$
implies that $\overline{\mathbb V}_\Delta\subset \overline{\mathbb V}_\Gamma$.

\smallskip
We show that 3.\ implies 4.\  Suppose $\Delta\arcleq \Delta'$,
then there is a sequence of moves which convert $\Delta$ to $\Delta'$.
Note that moves of type (A) or (B) leave the underlying LR-tableau unchanged,
while moves of type (C) or (D) exchange the positions of a box 
$\singlebox 1$ with a box $\singlebox 2$:
If $\Delta\arcleq\Delta'$ then ${\tilde\gamma}\boxleq{\tilde\gamma}'$.

\smallskip
For the converse we assume that the intermediate partitions
$\tilde\gamma$ for $\Gamma$ and ${\tilde\gamma}'$ for $\Gamma'$
satisfy the relation ${\tilde\gamma}\boxleq {\tilde\gamma}'$
and are such that ${\tilde\gamma}$ is obtained from ${\tilde \gamma}'$ by the
move of a single box, say from the $a$-th row up into the $b$-th row.
Let $\Delta'$ be the unique arc diagram of type $\Gamma'$ with the
maximal number of intersections. It follows that there is an arc in $\Delta'$
starting at $a$ which intersects an arc or pole in $\Delta'$ ending at $b$.
The arc move of type (C) or (D) 
which resolves this intersection yields a diagram $\Delta$
of type $\Gamma$.  Thus, $\Delta\arcleq\Delta'$. 
\end{proof}

\begin{prop}
\begin{enumerate}
\item The poset $\barD$ has a unique maximal element,
  it is the LR-tableau given by the unique arc diagram with the 
  maximal number of intersections. Equivalently, it is the 
  LR-tableau of type $(\alpha,\beta,\gamma)$ in which the 
  boxes $\singlebox 2$ are in the largest available rows.
\item The poset $\barD$ has a unique minimal element,
  it is given by the unique LR-tableau that can be refined only
  to arc diagrams with no intersections. 
  Equivalently, it is the LR-tableau of type $(\alpha,\beta,\gamma)$
  in which the boxes $\singlebox 2$ are in the smallest available rows.
\end{enumerate}
\end{prop}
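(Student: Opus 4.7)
The plan is to transfer extremality from the arc-diagram poset $\mathcal D_{\alpha,\gamma}^\beta$ (whose extremal structure is known from Theorem~\ref{thm-lattice}) down to $\barD$ via the quotient map sending each arc diagram to its underlying LR-tableau, and then to re-characterise the extrema in terms of intermediate partitions using Proposition~\ref{prop-dbar}(4).

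For part~(1), let $\Delta^{*}$ be the unique maximal arc diagram of $\mathcal D_{\alpha,\gamma}^\beta$ furnished by Theorem~\ref{thm-lattice}(3), and let $\Gamma^{*}$ be its LR-type. Given any $\Gamma\in\barD$ and any $\Delta\in\mathcal D_\Gamma$, maximality of $\Delta^{*}$ forces $\Delta\arcleq\Delta^{*}$, and Proposition~\ref{prop-dbar}(3) then yields $\Gamma\leq\Gamma^{*}$ in $\barD$. Uniqueness is just antisymmetry of $\barD$, which follows from Proposition~\ref{prop-dbar}(4) together with the injectivity of $\Gamma\mapsto\tilde\gamma$ when $\alpha_1\leq 2$. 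For the ``Equivalently'' clause, Proposition~\ref{prop-dbar}(4) identifies the $\barD$-order with $\boxleq$ on intermediate partitions, so $\tilde\gamma^{*}$ is the partition obtained by pushing the $\alpha'_2$ cells of $\tilde\gamma^{*}\setminus\gamma$ --- i.e.\ the $\singlebox 2$-positions of $\Gamma^{*}$ --- into the largest possible row indices, subject to $\gamma\subseteq\tilde\gamma^{*}\subseteq\beta$ and the LR lattice condition.

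Part~(2) is proved dually. Let $\tilde\gamma^{**}$ be the $\boxleq$-minimum valid intermediate partition, obtained by placing the added cells in the smallest possible row indices, and let $\Gamma^{**}$ be the corresponding LR-tableau. Proposition~\ref{prop-dbar}(4) then gives $\Gamma^{**}\leq\Gamma$ for every $\Gamma\in\barD$. To identify $\Gamma^{**}$ with ``the unique LR-tableau that can be refined only to arc diagrams with no intersections'', observe that when each $\singlebox 2$ sits in its smallest possible row $m$, the cell directly above it must contain a $\singlebox 1$ in row $m-1$; Klein condition~(b) then forces the subscript $r=m-1$, so the Klein refinement is unique, and the resulting arcs connect only consecutive rows $m$ and $m-1$, hence are pairwise non-crossing.

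The main obstacle I anticipate is justifying the ``Equivalently'' clauses rigorously --- showing that the greedy row-by-row placement of $\singlebox 2$-cells really does yield the $\boxleq$-extrema and always satisfies the LR lattice condition. A clean route is to check that the image of $\Gamma\mapsto\tilde\gamma$ forms a sublattice, under $\boxleq$, of the lattice of partitions between $\gamma$ and $\beta$; then $\tilde\gamma^{*}$ and $\tilde\gamma^{**}$ are simply its top and bottom elements, and the LR lattice condition need only be verified for these two explicit extremal candidates.
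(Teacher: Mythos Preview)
Your treatment of part~(1) is essentially the paper's: pull the maximal element down from $\mathcal D_{\alpha,\gamma}^\beta$ via Theorem~\ref{thm-lattice}(3) and Proposition~\ref{prop-dbar}, then read off the row description from the box order. Nothing to add there.

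Part~(2) has a genuine gap in the passage where you argue that every arc diagram refining $\Gamma^{**}$ is intersection-free. Your claim that ``when each $\singlebox 2$ sits in its smallest possible row $m$, the cell directly above it must contain a $\singlebox 1$ in row $m-1$'' is false in general: the cell above may lie in $\gamma$, so Klein condition~(b) is simply vacuous and does not pin down the subscript. Concretely, take $\alpha=(2,1)$, $\gamma=(2,2,1)$, $\beta=(3,2,2,1)$. The skew shape has cells in rows $1,3,4$ only; the minimal tableau $\Gamma^{**}$ has its unique $\singlebox 2$ in row~$3$, column~$2$, and the cell $(2,2)$ above it belongs to $\gamma$. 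The subscript is forced --- but by condition~(c), not~(b) --- to be $r=1$, since row~$2$ carries no $\singlebox 1$. The resulting arc runs from $3$ to $1$, not between consecutive rows, so your ``consecutive rows, hence non-crossing'' argument collapses as well.

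The paper handles this differently and avoids the Klein conditions altogether. It observes that any intersection in an arc diagram can be resolved in two ways, one by a move of type (A) or (B) and the other by the companion move of type (C) or (D) on the same pair of arcs. A (C) or (D) move always lifts a $\singlebox 2$ into a strictly smaller row; this is impossible for a diagram of type $\Gamma^{**}$, since the $\singlebox 2$'s already occupy the smallest available rows. Hence no (C)/(D) move is available, hence no intersection exists to be resolved, hence every refinement of $\Gamma^{**}$ is intersection-free. This argument is what you need to replace your Klein-condition paragraph with.
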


\begin{proof}
The first statement follows from Theorem~\ref{thm-lattice} 
and Proposition~\ref{prop-dbar}.
Namely, if $\Delta$ is the unique arc 
diagram with a maximum number of intersections, then $\mathbb V_\Delta$
is contained in the closure of any other stratum. 
Recall from \cite[Proof of Theorem~5.7]{kossch} that this LR-tableau
is such that the entries $\singlebox2$ are in the largest available rows.

\smallskip
Consider the LR-tableau $\Gamma$ which is such that the entries 
$\singlebox2$ are in the smallest available rows (to obtain $\Gamma$,
proceed rowwise from the top, and put in each row the largest possible
number of $\singlebox2$'s).  Let $\Delta$ be an arc diagram of type $\Gamma$.
It is not possible to resolve any intersection in $\Delta$ 
by arc moves of type (C) or (D)
since each such move lifts a box $\singlebox2$ into a higher row.
Since moves of type (A) and (C), and of type (B) and (D) occur pairwise,
it is not possible to resolve any intersection in $\Delta$ by arc
moves, i.e.\ $\Delta$ has no intersection.
\end{proof}

\begin{ex}
We revisit Example~\ref{figure2} on page~\pageref{figure2}.  Corresponding to the partitions
$\alpha=(2,2,1,1)$, $\beta=(4,3,3,2,2,1)$, $\gamma=(3,2,2,1,1)$ are
the four LR-tableaux $\Gamma_{43}$, $\Gamma_{42}$, $\Gamma_{33}$ and
$\Gamma_{32}$ pictured in Section~\ref{ex-gammas}. 
Here is the Hasse diagram for the partial ordering 
in $\barD$.
$$\begin{picture}(30,30)
\put(0,15){\makebox[0pt]{$\Gamma_{42}$}}
\put(15,0){\makebox[0pt]{$\Gamma_{32}$}}
\put(30,15){\makebox[0pt]{$\Gamma_{33}$}}
\put(15,30){\makebox[0pt]{$\Gamma_{43}$}}
\put(19,6){\vector(1,1){7}}
\put(4,21){\vector(1,1){7}}
\put(11,6){\vector(-1,1){7}}
\put(26,21){\vector(-1,1){7}}
\end{picture}$$
\end{ex}

\bigskip
Address of the authors:
\nopagebreak

\parbox[t]{5.5cm}{\footnotesize\begin{center}
              Faculty of Mathematics\\
              and Computer Science\\
              Nicolaus Copernicus University\\
              ul.\ Chopina 12/18\\
              87-100 Toru\'n, Poland\\[1ex]
              {\tt justus@mat.umk.pl}\end{center}}
\parbox[t]{5.5cm}{\footnotesize\begin{center}
              Department of\\
              Mathematical Sciences\\ 
              Florida Atlantic University\\
              777 Glades Road\\
              Boca Raton, Florida 33431\\[1ex]
              {\tt markus@math.fau.edu}\end{center}}

\end{document}